\documentclass[11pt]{myarticle}    
\author{Benjamin J. Wilson}
\title{A Character Formula for the Category $\categoryo$}
\usepackage{graphicx}  
\usepackage{subfigure}
\usepackage{ifthen}
\usepackage{geometry}
\usepackage[center]{caption} 

\theoremstyle{definition}

\newtheorem*{theoremnonum}{Theorem}

\newtheorem{theorem}[equation]{Theorem}
\newtheorem{propn}[equation]{Proposition}
\newtheorem{lemma}[equation]{Lemma}
\newtheorem{corollary}[equation]{Corollary}
\newtheorem{remark}[equation]{Remark}


\newcommand{\order}[1]{\text{ord}\hspace{0.1em}{#1}}
\newcommand{\irredcycle}{\upeta}
\newcommand{\loopend}{\hat{\upeta}}
\newcommand{\cycle}[1]{\upsigma_{#1}}
\newcommand{\vscomponent}[2]{#1(#2)}
\newcommand{\vscharacter}[1]{\mathscr{P}_{#1}}
\newcommand{\basis}{\mathrm{B}}
\newcommand{\params}[2]{\mathrm{D}_{#1, #2}}
\newcommand{\paramcycle}[1]{\uptau_{#1}}
\newcommand{\ordercount}[3]{\mathrm{O}_{#1, #2}(#3)}
\newcommand{\ordergf}[2]{\mathscr{O}_{#1, #2}}
\newcommand{\annihilator}[1]{\text{ann}(#1)}
\newcommand{\rous}[1]{\Re (#1)}
\newcommand{\rou}[1]{\zeta_{#1}}
\newcommand{\eigenspace}[3]{{#1}|^{#2}_{#3}}
\newcommand{\kroneckersymbol}{\delta}
\newcommand{\epbasis}[2]{\uptheta_{#1, #2}}
\newcommand{\onedimvector}[1]{\text{u}_{#1}}
\newcommand{\onedim}[1]{\K \onedimvector #1}

\newcommand{\K}{\Bbbk}

\newcommand{\Z}{\mathbb{Z}}
\newcommand{\Zmod}[1]{\Z_{#1}}
\newcommand{\N}{\mathbb{N}}
\newcommand{\zplus}{\Z_{+}}
\newcommand{\kcross}{\K^{\times}}
\newcommand{\epiarrow}{\twoheadrightarrow}
\newcommand{\kronecker}[2]{\delta_{#1, #2}}
\newcommand{\divides}[2]{#1  \mid #2}

\newcommand{\lineardual}[1]{#1^{\ast}} 
\newcommand{\indet}[1]{\mathrm{#1}} 
\newcommand{\indett}{\indet{t}} 
\newcommand{\laurent}{\K [ \indett, \indett^{-1}]}
\newcommand{\sublaurent}[1]{\K [ \indett^{#1}, \indett^{-#1}]}

\newcommand{\Span}{\text{span}}

\newcommand{\polyring}[2]{#1 [ \indet{#2}]}
\newcommand{\laurentabbrev}{\mathcal{A}}
\newcommand{\h}{\mathfrak{h}}
\newcommand{\g}{\mathfrak{g}}
\newcommand{\ghat}{\hat{\g}}
\newcommand{\affineg}{\tilde{\g}}
\newcommand{\affinecsa}{\tilde{\h}}
\newcommand{\hhat}{\hat{\h}}
\newcommand{\UEA}[1]{\mathcal{U}(#1)}
\newcommand{\SymAlg}[1]{\text{S}(#1)}
\newcommand{\gd}{\mathrm{d}}
\newcommand{\sle}{\mathrm{e}}
\newcommand{\slf}{\mathrm{f}}
\newcommand{\slh}{\mathrm{h}}
\newcommand{\SL}[1]{\mathrm{sl}(#1)}
\newcommand{\LieBrac}[2]{\lbrack \hspace{0.15em} #1 , #2 \hspace{0.15em}\rbrack} 
\newcommand{\groot}{\upalpha}
\newcommand{\imagroot}{\updelta}
\newcommand{\allfunctions}{{\mathcal F}} 
\newcommand{\exppoly}{{\mathcal E}} 
\newcommand{\charpoly}[1]{\mathrm{c}_{#1}}

\newcommand{\categoryo}{\tilde{\mathcal{O}}}
\newcommand{\hmod}[1]{\mathbf{H}(#1)}
\newcommand{\affineirred}[1]{\mathbf{N}(#1)}

\newcommand{\expmap}[1]{\textsc{exp}(#1)}
\newcommand{\loopmod}[1]{{\widehat{#1}}}

\newcommand{\ind}[1]{\mathfrak{V}(#1)}
\newcommand{\irred}[1]{\mathfrak{L}(#1)}

\newcommand{\weightspace}[2]{{#1}_{#2}}
\newcommand{\support}[1]{{\mathrm{s}}(#1)}
\newcommand{\Ind}[3]{\text{Ind}_{#1}^{#2} \  #3}
\newcommand{\Nilp}{\mathrm{N}} 
\newcommand{\gen}[1]{\textrm{v}_{#1}}
\newcommand{\epquotient}[2]{#1({#2})}
\newcommand{\kronmod}[1]{\wp_{#1}}

\newcommand{\character}[1]{\mathrm{char}\hspace{0.15em} {#1}}
\newcommand{\exppolychar}[1]{ \character{\affineirred{#1}}}
\newcommand{\ramanujan}[2]{\mathrm{c}_{#1}(#2)}
\newcommand{\eulertotient}{\upphi}
\newcommand{\moebiusmu}{\upmu}
\newcommand{\fouriercoeff}[1]{{\mathrm P}_{#1}}

\newcommand{\heis}{\mathfrak{t}}

\newcommand{\ts}{\textstyle}
\newcommand{\ie}{i.e.\ }

\newcommand{\EditingNote}[1]{}
\newcommand{\newterm}[1]{\textit{#1}}
\newcommand{\Note}[1]{\text{\small{\ (#1)}}}
\newcommand{\ENote}[1]{\text{\small{\quad (#1)}}}

\newcommand{\SFrac}[2]{\textstyle{\frac{#1}{#2}}}


\begin{document}
\begin{abstract}
One may construct, for any function on the integers, an irreducible module of level zero for affine $\SL 2$, using the values of the function as structure constants.  
The modules constructed using exponential-polynomial functions realise the irreducible modules  with finite-dimensional weight spaces in the category $\categoryo$ of Chari.
In this work, an expression for the formal character of such a module is derived using the highest-weight theory of truncations of the loop algebra.
\end{abstract}

\maketitle

\begin{section}{Introduction}\label{introduction}
Let $\g$ denote the Lie algebra $\SL 2$ of $2 \times 2$ traceless matrices over an algebraically closed field $\K$ of characteristic zero.
Associated to $\g$ is the centreless affine Lie algebra $\affineg$,
$$\affineg = \g \otimes \K [ \indett, \indett^{-1}] \oplus \K \gd$$
obtained as an extension of the loop algebra by a degree derivation $\gd$.
The Cartan subalgebra of $\affineg$ is given by $\affinecsa = \h \oplus \K \gd$, where $\h$ is the Cartan subalgebra of $\g$.
The category $\categoryo$, introduced by Chari \cite{Chari86}, is an analogue of the category $\mathcal O$ of Bernstein, Gelfand and Gelfand \cite{BGG}, corresponding to the natural Borel subalgebra of $\affineg$ (that is, to the loop algebra of the Borel subalgebra of $\g$).
For every exponential-polynomial function $\varphi : \Z \to \K$, one may construct an irreducible $\affineg$-module $\affineirred \varphi$ in the category $\categoryo$ with finite-dimensional weight spaces.
It follows from Chari \cite{Chari86}, and from Billig and Zhao \cite{BilligZhao}, that any irreducible module in the category $\categoryo$ with all weight spaces of finite dimension can be obtained as the tensor product of a one-dimensional module with a module $\affineirred \varphi$.
In this paper, a formula for the character of the modules $\affineirred \varphi$ is derived, thus describing the character of the irreducible modules in the category $\categoryo$ with finite-dimensional weight spaces.

For any weight $\affineg$-module $M$, write
$$ M = \bigoplus_{\chi \in \lineardual \affinecsa} \weightspace M \chi \quad \text{where} \quad h |_{\weightspace M \chi} = \chi (h), \ \ h \in \affinecsa,$$
and if the weight spaces $\weightspace M \chi$ are finite dimensional, write
$$ \character M = \sum_{\chi \in \lineardual \affinecsa} \dim \weightspace M \chi \hspace{0.15em} \indet{Z}^\chi$$
for its character.

A function $\varphi: \Z \to \K$ is \newterm{exponential polynomial} if it can be written as a finite sum of products of polynomial and exponential functions.
For any $\lambda \in \K^{\times}$, define the exponential function\newnot{notn:expmap}
$$ \expmap \lambda : \Z \to \K, \qquad \expmap \lambda (m) = \lambda^m, \quad m \in \Z.$$
For any positive integer $r$, define the function\newnot{notn:kronmod}
$$\kronmod r = \sum_{\zeta^r = 1} \expmap{\zeta},$$
where the sum is over all roots of unity $\zeta$ such that $\zeta^r = 1$.
If $\varphi: \Z \to \K $ is a non-zero exponential-polynomial function, then there exists a unique $r >0$ such that  
\begin{equation}\label{exppolyexpression}
\varphi = \kronmod r \cdot \sum_i a_i \hspace{0.1em} \expmap{\lambda_i}
\end{equation}
for some finite collection of polynomial functions $a_i : \Z \to \K $ and scalars $\lambda_i \in \K^{\times}$, such that if 
$(\lambda_i / \lambda_j)^r = 1 $, then $i = j$. 
Let $\groot$ denote the positive root of $\g$, and let $\imagroot$ denote the fundamental imaginary root of $\affineg$.
The characters of the modules $\affineirred \varphi$ are described by the following theorem.
\begin{theoremnonum}
Let $\varphi : \Z \to \K $ be a non-zero exponential-polynomial function.
In the notation of (\ref{exppolyexpression}),
\begin{equation}\label{characterformula}
\exppolychar \varphi = \indet{Z}^{\varphi (0) \frac{\groot}{2}} \cdot \frac{1}{r} \sum_{n \in \Z} \sum_{\divides d r} \ramanujan d n \left ( \fouriercoeff \varphi (\indet{Z}^{-d\groot}) \right ) ^{\frac r d} \indet{Z}^{n\imagroot},
\end{equation}
where the inner sum is over the positive divisors $d$ of $r$, the quantities $\ramanujan d n$ are Ramanujan sums, and 
$$
\fouriercoeff \varphi (\indet Z) = \frac{ \prod_{a_i \in \zplus} (1 - \indet{Z}^{a_i + 1})} { (1 - \indet Z)^M},
$$
where $M = \sum_i (\deg {a_i} + 1)$ and the product is over those indices $i$ for which $a_i \in \zplus$.
\end{theoremnonum}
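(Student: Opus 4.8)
The plan is to realise $\affineirred\varphi$ as a loop module over a finite ``fibre'' made explicit by the highest-weight theory of truncated current algebras, and then read off the character. Write $q=\indet{Z}^{-\groot}$ and, in the notation of (\ref{exppolyexpression}), set $\psi=\sum_i a_i\,\expmap{\lambda_i}$, so $\varphi=\kronmod r\cdot\psi$. As $\kronmod r(m)=\sum_{\zeta^r=1}\zeta^m$, we have $\varphi=\sum_{\zeta^r=1}\psi^{(\zeta)}$, where $\psi^{(\zeta)}(m)=\zeta^m\psi(m)=\sum_i a_i(m)(\zeta\lambda_i)^m$; the hypothesis that $(\lambda_i/\lambda_j)^r=1$ forces $i=j$ says exactly that the exponentials $\zeta\lambda_i$ (over the $r$-th roots of unity $\zeta$ and over $i$) are pairwise distinct, i.e. the $\psi^{(\zeta)}$ have pairwise disjoint sets of exponentials. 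Here $\annihilator\psi=(p_\psi)$ with $p_\psi=\prod_i(\indett-\lambda_i)^{\deg a_i+1}$ and $\annihilator\varphi=(p_\varphi)$ with $p_\varphi=\prod_i(\indett^r-\lambda_i^r)^{\deg a_i+1}=\prod_{\zeta^r=1}\prod_i(\indett-\zeta\lambda_i)^{\deg a_i+1}$, so the operators by which $\g\otimes\laurent$ acts on the fibre of $\affineirred\psi$ (resp. $\affineirred\varphi$) factor through $\g\otimes(\laurent/(p_\psi))$ (resp. $\g\otimes(\laurent/(p_\varphi))$).

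First I would compute the fibres. By the Chinese remainder theorem $\g\otimes(\laurent/(p_\psi))=\bigoplus_i\g\otimes\bigl(\K[\indett]/(\indett-\lambda_i)^{\deg a_i+1}\bigr)$, so its irreducible highest-weight module is $F_\psi=\bigotimes_i F_{a_i,\lambda_i}$, where $F_{a,\lambda}$ is the irreducible highest-weight module of the truncated current algebra $\g\otimes\bigl(\K[\indett]/(\indett-\lambda)^{\deg a+1}\bigr)$ attached to $a$. Concretely $F_{a,\lambda}$ is the $(a+1)$-dimensional irreducible $\g$-module pulled back along evaluation at $\lambda$ when $a\in\zplus$ is a constant, and otherwise the Verma-type module $\SymAlg{\K\slf\otimes(\K[\indett]/(\indett-\lambda)^{\deg a+1})}$, which is then automatically irreducible; in either case it has highest weight $a(0)\tfrac{\groot}{2}$ and, graded by $(-\groot)$-distance below the highest weight, Hilbert series $\tfrac{1-q^{a+1}}{1-q}$, resp. $\tfrac{1}{(1-q)^{\deg a+1}}$. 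One then shows that $\affineirred\psi$ is, as a $\affinecsa$-graded space, the loop module $F_\psi\otimes\laurent$ (with $\gd$ acting by $m$ on $F_\psi\otimes\indett^m$), so that $F_\psi$ has highest weight $\psi(0)\tfrac{\groot}{2}$ and
\[
\mathrm{Hilb}_q F_\psi \;=\; \prod_i \mathrm{Hilb}_q F_{a_i,\lambda_i}\;=\;\frac{\prod_{a_i\in\zplus}(1-q^{a_i+1})}{(1-q)^M}\;=\;\fouriercoeff\varphi(q),
\]
where $M=\sum_i(\deg a_i+1)$. Exactly the same reasoning applied to $p_\varphi$ shows that the fibre of $\affineirred\varphi$ is $\bigotimes_{\zeta,i}F_{a_i,\zeta\lambda_i}$, which is isomorphic to $F_\psi^{\otimes r}$ as a graded vector space and has highest weight $r\psi(0)\tfrac{\groot}{2}=\varphi(0)\tfrac{\groot}{2}$, using $\varphi(0)=\kronmod r(0)\,\psi(0)=r\,\psi(0)$.

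The crux is to determine how the $\gd$-grading of $\affineirred\varphi$ sits relative to the fibre $F_\psi^{\otimes r}$ --- it is \emph{not} the naive loop module $F_\psi^{\otimes r}\otimes\laurent$. The assignment $\indett\mapsto\rou r\indett$, $\gd\mapsto\gd$ is an order-$r$ automorphism $\tau$ of $\affineg$ which fixes $\affinecsa$ pointwise (hence preserves characters), carries $\affineirred\psi$ to $\affineirred{\psi^{(\rou r)}}$, and permutes the $r$ blocks $\g\otimes(\K[\indett]/(\indett-\zeta\lambda_i)^{\deg a_i+1})$ (for each fixed $i$) cyclically; consequently $\affineirred\varphi$ carries an action of the cyclic group $\Zmod r$ which, on the fibre, is the cyclic permutation of the $r$ tensor factors. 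The claim to establish --- the heart of the argument --- is that the $\gd$-weight-$m$ subspace of $\affineirred\varphi$ is precisely the $n$-th $\Zmod r$-isotypic component of $F_\psi^{\otimes r}$, where $n\equiv m\pmod r$: the loop variable of the $j$-th tensor factor carries the phase $\rou r^{\,j}$, which couples the $\gd$-grading to the isotypic grading and forces this matching. Granting it,
\[
\exppolychar\varphi\;=\;\indet{Z}^{\varphi(0)\frac{\groot}{2}}\sum_{m\in\Z}\mathrm{Hilb}_q\!\bigl((F_\psi^{\otimes r})_{m\bmod r}\bigr)\,\indet{Z}^{m\imagroot}.
\]

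It remains to carry out the cyclic-group bookkeeping, which is routine. For a $\Z$-graded vector space $F$ with $f=\mathrm{Hilb}_q F$, Burnside's formula gives $\mathrm{Hilb}_q\!\bigl((F^{\otimes r})_n\bigr)=\tfrac1r\sum_{j=0}^{r-1}\rou r^{-nj}\,\operatorname{Tr}\bigl(\sigma^j\bigm|F^{\otimes r}\bigr)$, the graded trace of the $j$-th power of the cyclic generator $\sigma$; since $\sigma^j$ permutes the $r$ factors in $\gcd(j,r)$ cycles of equal length $r/\gcd(j,r)$, its graded trace is $f\bigl(q^{\,r/\gcd(j,r)}\bigr)^{\gcd(j,r)}$. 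Grouping the $j$ by $d:=\gcd(j,r)$, the inner sum over the values of $j/d$ coprime to $r/d$ is the Ramanujan sum $\ramanujan{r/d}{n}$, so $\mathrm{Hilb}_q\!\bigl((F^{\otimes r})_n\bigr)=\tfrac1r\sum_{\divides d r}\ramanujan{r/d}{n}\,f\bigl(q^{\,r/d}\bigr)^{d}$, and reindexing $d\mapsto r/d$ gives $\tfrac1r\sum_{\divides d r}\ramanujan d n\,f(q^d)^{r/d}$. Substituting $f=\fouriercoeff\varphi$ and $q=\indet{Z}^{-\groot}$ (so $q^d=\indet{Z}^{-d\groot}$) into the displayed expression for $\exppolychar\varphi$ yields (\ref{characterformula}). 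The main obstacle is the identification in the preceding paragraph --- proving that $\affineirred\varphi$ really is the ``cyclically interleaved'' loop module $\bigoplus_{m\in\Z}(F_\psi^{\otimes r})_{m\bmod r}\otimes\indett^m$, equivalently pinning down which subquotient of $\bigotimes_{\zeta^r=1}\affineirred{\psi^{(\zeta)}}$ it is and checking that the $\gd$-grading distributes the isotypic components as stated --- whereas the fibre computation and the cyclic-group count are comparatively standard.
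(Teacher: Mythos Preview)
Your strategy matches the paper's: realise $\affineirred\varphi$ inside the loop module $\loopmod{\irred\varphi}$, identify $\irred\varphi\cong\irred\psi^{\otimes r}$ as graded spaces with the $\Zmod r$-action given by cyclic permutation of tensor factors, compute $\vscharacter{\irred\psi}=\fouriercoeff\varphi$ via the highest-weight theory of truncated current algebras, and then count semi-invariants. The paper packages the first step as Theorem~\ref{loopendtheorem} and Corollary~\ref{characterinvariant}, the second as Proposition~\ref{recognitionofcyclicend}, and the third as Proposition~\ref{TensorSimplePropn} plus Theorem~\ref{tclacharacter}; you correctly flag the matching of the $\gd$-grading with the isotypic decomposition as the crux, and that is indeed where most of the work in Section~\ref{loopmodsection} and Proposition~\ref{recognitionofcyclicend} goes.

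The one substantive difference is the cyclic count. You invoke the Burnside/trace formula directly: write $\mathrm{Hilb}_q(F^{\otimes r})_n=\tfrac1r\sum_j\rou r^{-nj}\operatorname{Tr}(\sigma^j\mid F^{\otimes r})$, evaluate the graded trace of $\sigma^j$ from its cycle structure as $f(q^{r/\gcd(j,r)})^{\gcd(j,r)}$, and group by $d=\gcd(j,r)$ to produce Ramanujan sums. The paper instead proves Theorem~\ref{semiinvarianttheorem} combinatorially, counting basis tuples by their $\paramcycle r$-order and applying M\"obius inversion (Propositions~\ref{ordercountingpropn1}--\ref{orderformula}). Your route is shorter and more standard; the paper's is self-contained and avoids appealing to character orthogonality.

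One small correction: your aside that ``$\affineirred\psi$ is, as an $\affinecsa$-graded space, the loop module $F_\psi\otimes\laurent$'' is not generally true --- that would require $\deg\psi=1$, which the decomposition~(\ref{exppolyexpression}) does not guarantee. Fortunately you never actually use this; the argument only needs the $\ghat$-module $\irred\psi$ (your $F_\psi$), not $\affineirred\psi$, so just drop that sentence.
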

The \newterm{Ramanujan sum} $\ramanujan d n$ is given by
\begin{equation}\label{ramanujandefn}
\ramanujan d n =  \frac{ \eulertotient(d) \moebiusmu(d')}{\eulertotient(d')}, \qquad d' = \frac{d}{\gcd (d,n)},
\end{equation}
where $\eulertotient$ denotes Euler's totient function and $\moebiusmu$ denotes the M\"obius function.

It may be deduced from the character formula \eqref{characterformula} that if the weight space $\weightspace {\affineirred \varphi} \chi$ is non-zero, then
$$ \chi =\varphi (0)\frac{\groot}{2} - k\groot + n \imagroot,$$
for some $n \in \Z$ and $k \geqslant 0$.
The function $\ramanujan d {\cdot}$ has period $d$, and so for any $k \geqslant 0$, the multiplicity function
$$ n \mapsto \dim \weightspace {\affineirred \varphi} {\varphi (0)\frac{\groot}{2} - k\groot + n \imagroot} , \qquad n \in \Z,$$
has period $r$.
Therefore the character of $\affineirred \varphi$ is completely described by the array of weight-space multiplicities
$$ \left [ \dim \weightspace {\affineirred \varphi} {\varphi (0)\frac{\groot}{2} - k\groot + n \imagroot} \right ]$$
where $k \geqslant 0$ and $0 \leqslant n < r$.
Examples of these arrays, such as those illustrated by Figures \ref{multexample1} -- \ref{multexample4}, may be computed in a straightforward manner using the formula \eqref{characterformula}.
Columns are indexed left to right by $n$, where $0 \leqslant n < r$, while rows are indexed from top to bottom by $k \geqslant 0$.
\begin{figure}
\begin{center}
\subfigure[$\varphi = \kronmod 6$]{\label{multexample1}
\begin{minipage}[t]{6.5cm}
$$
\begin{array}{cccccc}
1&0&0&0&0&0 \\
1&1&1&1&1&1\\
3&2&3&2&3&2\\
4&3&3&4&3&3\\
3&2&3&2&3&2\\
1&1&1&1&1&1\\
1&0&0&0&0&0\\
0&0&0&0&0&0\\
 &\vdots&&&\vdots&
\end{array}
$$
\end{minipage}}
\subfigure[$ \varphi = - \kronmod 4 (\expmap \lambda + \expmap \mu)$]{\label{multexample2}
\begin{minipage}[t]{6.5cm}
$$
\begin{array}{cccc}
  1&0&0&0 \\
  2&2&2&2 \\
  10&8&10&8 \\
  30&30&30&30 \\
  86&80&84&80 \\
  198&198&198&198 \\
  434&424&434&424 \\
  858&858&858&858 \\
&\vdots&\vdots&
\end{array}
$$
\end{minipage}}
\subfigure[$\varphi = -\kronmod 6$]{\label{multexample3}
\begin{minipage}[b]{6.5cm}
$$
\begin{array}{cccccc}
  1&0&0&0&0&0 \\
  1&1&1&1&1&1 \\
  4&3&4&3&4&3 \\
  10&9&9&10&9&9 \\
  22&20&22&20&22&20 \\
  42&42&42&42&42&42 \\
  80&75&78&76&78&75 \\
  132&132&132&132&132&132 \\
  217&212&217&212&217&212 \\
 335& 333& 333& 335& 333& 333 \\
&\vdots&&&\vdots&
\end{array}
$$
\end{minipage}}
\subfigure[$\varphi = \kronmod 2 (\expmap \lambda - \expmap \mu)$]{\label{multexample4}
\begin{minipage}[b]{6.5cm}
$$
\begin{array}{cc}
  1&0 \\
  2&2 \\
  5&3 \\
  6&6 \\
  9&7 \\
  10&10 \\
  13&11 \\
  14&14 \\
  17&15 \\
  18&18 \\
  \vdots&\vdots
\end{array}
$$
\end{minipage}}
\end{center}
\caption{Array of weight-space multiplicites of $\affineirred \varphi$}
\label{besteira}
\end{figure}

Greenstein \cite{GreensteinCharactersSL} (see also \cite[Section 4.1]{QuantumLoopModules}) has derived an explicit formula for the character of an integrable irreducible object of the category $\categoryo$.
These objects are precisely the exponential-polynomial modules $\affineirred \varphi$ where $\varphi$ is a linear combination of exponential functions with non-negative integral coefficients.
Indeed, our result may alternatively be deduced by considering separately the case where $\affineirred \varphi$ is integrable, employing the result of Greenstein, and the case where $\affineirred \varphi$ is not integrable, using Molien's Theorem.
Our approach, via a general study of finite cyclic-group actions, has the advantage of permitting a unified proof.
Both approaches employ the explicit expression of the character of an irreducible highest-weight module for a truncated current Lie algebra described in \cite{WilsonTCLA}.
\end{section}

\begin{section}{Exponential-Polynomial Functions}
A function $\varphi: \Z \to \K$ is \newterm{exponential polynomial} if it can be written as a finite sum of products of polynomial and exponential functions, \ie 
\begin{equation*}
\sum_{\lambda \in \kcross} \varphi_\lambda \hspace{0.1em} \expmap{\lambda},
\end{equation*}
for some polynomial functions $\varphi_\lambda : \Z \to \K$ and distinct scalars $\lambda \in \kcross$.\newnot{notn:expmapcoeff1}
Write\newnot{notn:exppoly1}
$$ \exppoly = \set{ \varphi : \Z \to \K | \varphi \  \text{is exponential polynomial}}.$$

Let $ \laurentabbrev = \laurent$.

\begin{subsection}{Module structure}
Let $\allfunctions = \set{\varphi: \Z \to \K}$.
Define an endomorphism $\tau$ of the vector space $\allfunctions$ via
$$(\tau \cdot \varphi) (m) = \varphi (m + 1), \qquad m \in \Z, \quad \varphi \in \allfunctions.$$
The rule $\indett \mapsto \tau$ endows $\allfunctions$ with the structure of an $\laurentabbrev$-module.
For any $k \geqslant 0$ and $\lambda \in \kcross$, define the function \newnot{notn:epbasis} $\epbasis \lambda k \in \allfunctions$ by 
$$ \epbasis \lambda k (m) = m^k \lambda^m, \qquad m \in \Z.$$
By definition, $\exppoly$ is the linear subspace of $\allfunctions$ spanned by the $\epbasis \lambda k$.
It is immediate from the following lemma that $\exppoly$ is an $\laurentabbrev$-submodule of $\allfunctions$.

\begin{lemma}\label{annihilationlemma}
For any $\lambda, \mu \in \kcross$ and $k \geqslant 0$, 
\begin{enumerate}
\item\label{annihilationlemma1} $(\indett - \mu) \cdot \epbasis \lambda k = (\lambda - \mu) \epbasis \lambda k + \lambda \sum_{j=0}^{k-1} \binom {k} j \epbasis \lambda {j}$;
\item\label{annihilationlemma2} $(\indett - \lambda)^k \cdot \epbasis \lambda k = k! \lambda^k \epbasis \lambda 0$.
\item\label{annihilationlemma3} $(\indett - \lambda)^{k+1} \cdot \epbasis \lambda k = 0$.
\end{enumerate}
\end{lemma}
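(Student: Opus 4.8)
The plan is to prove part \ref{annihilationlemma1} by a direct evaluation using the binomial theorem, and then to deduce parts \ref{annihilationlemma2} and \ref{annihilationlemma3} from it by an induction that keeps track of leading terms. For \ref{annihilationlemma1}, evaluate both sides at an arbitrary $m \in \Z$. By the definition of the $\laurentabbrev$-action, $\bigl((\indett - \mu)\cdot\epbasis \lambda k\bigr)(m) = (m+1)^k\lambda^{m+1} - \mu\hspace{0.1em} m^k\lambda^m$. Expanding $(m+1)^k = \sum_{j=0}^{k}\binom k j m^j$ gives $(m+1)^k\lambda^{m+1} = \lambda\sum_{j=0}^{k}\binom k j \epbasis \lambda j (m)$; separating the $j = k$ summand, which equals $\lambda\hspace{0.1em}\epbasis \lambda k(m)$, and combining it with $-\mu\hspace{0.1em}\epbasis \lambda k(m)$ yields the asserted identity. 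There is nothing subtle here.

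For \ref{annihilationlemma2} and \ref{annihilationlemma3}, the key consequence of \ref{annihilationlemma1} is its $\mu = \lambda$ specialisation, $(\indett - \lambda)\cdot\epbasis \lambda k = \lambda\sum_{j=0}^{k-1}\binom k j \epbasis \lambda j$; in particular its leading term is $\lambda k\hspace{0.1em}\epbasis \lambda {k-1}$, all remaining terms involving $\epbasis \lambda j$ with $j < k-1$, and $(\indett - \lambda)\cdot\epbasis \lambda 0 = 0$. I would then prove by induction on $i$, for $0 \leqslant i \leqslant k$, the refined statement
\[
(\indett - \lambda)^i\cdot\epbasis \lambda k = \frac{k!}{(k-i)!}\,\lambda^i\,\epbasis \lambda {k-i} + \bigl(\text{a $\K$-linear combination of the }\epbasis \lambda j\text{ with }j < k-i\bigr).
\]
The case $i = 0$ is trivial. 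For the inductive step, apply $(\indett - \lambda)$ to the right-hand side: the $\mu = \lambda$ case of \ref{annihilationlemma1} sends the leading term $\tfrac{k!}{(k-i)!}\lambda^i\epbasis \lambda {k-i}$ to $\tfrac{k!}{(k-i-1)!}\lambda^{i+1}\epbasis \lambda {k-i-1}$ plus terms in degrees $< k-i-1$, while each remaining term $\epbasis \lambda j$ with $j < k-i$ is sent, again by \ref{annihilationlemma1}, into $\Span\{\epbasis \lambda 0, \dots, \epbasis \lambda {j-1}\} \subseteq \Span\{\epbasis \lambda 0, \dots, \epbasis \lambda {k-i-2}\}$, hence contributes only to degrees $< k-i-1$. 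Setting $i = k$ gives \ref{annihilationlemma2}; applying $(\indett - \lambda)$ once more to $k!\lambda^k\epbasis \lambda 0$ and using $(\indett - \lambda)\cdot\epbasis \lambda 0 = 0$ gives \ref{annihilationlemma3}.

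The only point requiring any care is the bookkeeping of the subleading terms in the induction, so that the coefficient $k!\lambda^k$ in \ref{annihilationlemma2} is exact and the lower-order terms are seen to die. I do not expect a genuine obstacle. As an alternative one could observe that, under the linear identification $\epbasis \lambda j \leftrightarrow x^j$, the operator $\indett - \lambda$ corresponds to $\lambda$ times the forward difference operator $\Delta\colon p(x) \mapsto p(x+1) - p(x)$, whence \ref{annihilationlemma2} and \ref{annihilationlemma3} reduce to the classical identities $\Delta^k x^k = k!$ and $\Delta^{k+1} x^k = 0$; but the direct induction above is self-contained and of comparable length.
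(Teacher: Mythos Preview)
Your proposal is correct and follows essentially the same approach as the paper: part~\ref{annihilationlemma1} is proved by direct evaluation via the binomial expansion of $(m+1)^k$, and parts~\ref{annihilationlemma2}--\ref{annihilationlemma3} by repeated application of~\ref{annihilationlemma1} with $\mu=\lambda$, organised as an induction. The only cosmetic difference is that the paper inducts on $k$ (proving $(\indett-\lambda)^k\cdot\epbasis\lambda k = k!\,\lambda^k\,\epbasis\lambda 0$ directly, with the lower-order terms killed by the inductive hypothesis for smaller indices), whereas you induct on the exponent $i$ of $(\indett-\lambda)$ for fixed $k$ with a refined statement tracking the leading coefficient; the content is identical.
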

\begin{proof}
For any $m \in \Z$,
\begin{eqnarray*}
(\indett \cdot \epbasis \lambda k) (m) = \epbasis \lambda k (m+1) 
	&=& (m+1)^k \lambda^{m+1} \\
	&=& \textstyle \lambda \sum_{j=0}^k \binom k j m^j \lambda^m \\
	&=& \textstyle \lambda \sum_{j=0}^k \binom k j \epbasis \lambda j (m).
\end{eqnarray*}
Therefore,
$$ \textstyle (\indett - \mu) \cdot \epbasis \lambda k = \lambda \sum_{j=0}^k \binom k j \epbasis \lambda j - \mu \epbasis \lambda k,$$
and so part \ref{annihilationlemma1} is proven.
Part \ref{annihilationlemma2} is proven by induction.  The claim is trivial if $k=0$, so suppose that the claim holds for some $k \geqslant 0$.
Then
\begin{eqnarray*}
(\indett - \lambda)^{k+1} \cdot \epbasis \lambda {k+1} &=& \textstyle (\indett - \lambda)^k \cdot \lambda \sum_{j=0}^k \binom {k+1} j \epbasis \lambda j \\
&=& \textstyle \lambda \binom {k+1} k k! \lambda^k \epbasis \lambda 0 \ENote{by inductive hypothesis} \\
&=& (k+1)! \lambda^{k+1} \epbasis \lambda 0,
\end{eqnarray*}
where part \ref{annihilationlemma1} is used in obtaining the first and second equalities.
Therefore the claim holds for all $k \geqslant 0$ by induction.
Part \ref{annihilationlemma3} follows immediately from parts \ref{annihilationlemma1} and \ref{annihilationlemma2}.
\end{proof}

\begin{propn}\label{basispropn}
The set $\set{ \epbasis \lambda k | \lambda \in \kcross, \ k \geqslant 0}$ is linearly independent, and hence is a basis for $\exppoly$.
\end{propn}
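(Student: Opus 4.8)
The plan is to prove the linear independence directly, isolating the contribution of each exponential by means of Lemma~\ref{annihilationlemma}, after first settling the single-exponential case. Since $\exppoly$ is \emph{defined} as the span of the $\epbasis \lambda k$, linear independence then immediately yields the basis assertion.

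First I would dispose of the case of a single $\lambda$: for fixed $\lambda \in \kcross$, the functions $\epbasis \lambda k$ with $k \geqslant 0$ are linearly independent, because a relation $\sum_k c_k \epbasis \lambda k = 0$ evaluated at $m \in \Z$ reads $\lambda^m \sum_k c_k m^k = 0$, and as $\lambda^m \neq 0$ the polynomial $\sum_k c_k X^k$ vanishes at every integer and hence is zero. For the general case, suppose $\sum_{i=1}^{s} \sum_k c_{i,k}\, \epbasis {\lambda_i} k = 0$ with the $\lambda_i \in \kcross$ pairwise distinct, and fix an index $i_0$. Choosing $N$ strictly larger than every $k$ occurring in the relation, I would apply the operator $T = \prod_{i \neq i_0} (\indett - \lambda_i)^{N} \in \laurentabbrev$. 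The factors commute, so by Lemma~\ref{annihilationlemma}\ref{annihilationlemma3} the operator $T$ annihilates $\epbasis {\lambda_i} k$ for every $i \neq i_0$ and every $k < N$, hence kills the entire $i \neq i_0$ part of the relation; what remains is $T \cdot \bigl( \sum_k c_{i_0, k}\, \epbasis {\lambda_{i_0}} k \bigr) = 0$. On the other hand, Lemma~\ref{annihilationlemma}\ref{annihilationlemma1} shows that each factor $(\indett - \lambda_i)$ with $\lambda_i \neq \lambda_{i_0}$ sends $\epbasis {\lambda_{i_0}} k$ to $(\lambda_{i_0} - \lambda_i)\epbasis {\lambda_{i_0}} k$ plus a combination of $\epbasis {\lambda_{i_0}} j$ with $j < k$; iterating, $T$ acts on $\Span\{\epbasis {\lambda_{i_0}} j : j \geqslant 0\}$ by an upper-triangular transformation whose every diagonal entry equals the nonzero scalar $\prod_{i \neq i_0}(\lambda_{i_0} - \lambda_i)^{N}$, and is therefore injective on that span. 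Consequently $\sum_k c_{i_0, k}\, \epbasis {\lambda_{i_0}} k = 0$, and the single-exponential case forces $c_{i_0, k} = 0$ for all $k$. Since $i_0$ was arbitrary, the relation was trivial.

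I do not expect a genuine obstacle. The only point needing care is the verification that $T$ is triangular with nonzero diagonal on $\Span\{\epbasis {\lambda_{i_0}} j : j \geqslant 0\}$, which is exactly the content of Lemma~\ref{annihilationlemma}\ref{annihilationlemma1}; and one must remember to treat the single-$\lambda$ case separately, since the independence of the $\epbasis \lambda j$ for fixed $\lambda$ is itself part of what is being proved and cannot be invoked while running the triangularity argument.
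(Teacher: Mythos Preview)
Your proof is correct and follows essentially the same strategy as the paper's: apply a product of factors $(\indett-\mu)^{n}$ supplied by Lemma~\ref{annihilationlemma} to annihilate all exponentials but one. The paper streamlines the execution by choosing the exponents more precisely, taking $\prod_{\mu\in Z}(\indett-\mu)^{n_\mu+1-\kronecker{\lambda}{\mu}}$ so that the operator kills every term except the single top contribution $\gamma_{\lambda,n_\lambda}\epbasis{\lambda}{n_\lambda}$ (which it sends to a nonzero multiple of $\epbasis{\lambda}{0}$), thereby bypassing both your separate single-exponential argument and the triangularity step; but the underlying idea is the same.
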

\begin{proof}
Suppose that $\gamma_{\lambda, k} \in \K$, $\lambda \in \kcross$, $k \geqslant 0$, are scalars such that the sum
$$ \varphi = \sum_{\lambda \in \kcross} \sum_{k \geqslant 0} \gamma_{\lambda, k} \epbasis \lambda k,$$
is finite and equal to zero.
Write $Z = \set{\lambda \in \kcross | \gamma_{\lambda, k} \ne 0 \ \ \text{for some} \ \ k \geqslant 0}$, and let
$$ n_\lambda = \text{max} \set{ k | \gamma_{\lambda, k} \ne 0}, \quad \lambda \in Z.$$
Then, for any $\lambda \in Z$, 
\begin{eqnarray*}
0 &=& \prod_{\mu \in Z} (\indett - \mu)^{n_\mu + 1 - \kronecker \lambda \mu} \cdot \varphi \\
&=& \prod_{\mu \in Z} (\indett - \mu)^{n_\mu + 1 - \kronecker \lambda \mu} \cdot ( \gamma_{\lambda, n_\lambda} \epbasis \lambda {n_\lambda}) \\
&=& \gamma_{\lambda, n_\lambda} \lambda^{n_\lambda}   {n_\lambda}! \prod_{\mu \in Z, \mu \ne \lambda} (\lambda - \mu)^{n_\mu + 1} \cdot \epbasis \lambda 0, 
\end{eqnarray*}
by Lemma \ref{annihilationlemma}. 
Therefore $\gamma_{\lambda, n_\lambda} = 0$ for all $\lambda \in Z$, which is absurd, unless $Z$ is the empty set.
\end{proof}
\end{subsection}

\begin{subsection}{Recurrence relations}\label{recurrencerelations}
Suppose that $c (\indett) \in \polyring \K t$ is a non-zero polynomial of degree $q$, and write $c (\indett) = \sum_{k = 0}^q c_k \indett^k$.  Then, for any $\varphi \in \allfunctions$, we have $c \cdot \varphi = 0$ if and only if
\begin{equation}\label{recurrencerelation}
0 = (c \cdot \varphi) (m) = c_0 \varphi (m) + c_1 \varphi (m+1) + \cdots + c_q \varphi(m+q), 
\end{equation}
for all $m \in \Z$.
That is, $c \cdot \varphi = 0$ precisely when the values of $\varphi$ satisfy the linear homogeneous recurrence relation with constant coefficients \eqref{recurrencerelation} defined by $c$.
The following proposition characterises the exponential-polynomial functions as the solutions of such recurrence relations.

\begin{propn}\label{exppolychar}
The $\laurentabbrev$-submodule $\exppoly \subset \allfunctions$ is characterised by
$$\exppoly = \set{ \varphi \in \allfunctions | c \cdot \varphi = 0 \ \ \text{for some} \ \ c \in \polyring \K t}.$$
\end{propn}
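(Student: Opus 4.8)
The plan is to prove the two inclusions separately. The inclusion $\exppoly \subseteq \set{ \varphi \in \allfunctions \mid c \cdot \varphi = 0 \text{ for some } c \in \polyring \K t}$ is the easy direction and follows immediately from what has already been established: by Proposition \ref{basispropn}, every $\varphi \in \exppoly$ is a finite linear combination $\varphi = \sum_{\lambda \in Z} \sum_{k \leqslant n_\lambda} \gamma_{\lambda, k} \epbasis \lambda k$, and by Lemma \ref{annihilationlemma}\ref{annihilationlemma3} the polynomial $c(\indett) = \prod_{\lambda \in Z} (\indett - \lambda)^{n_\lambda + 1}$ annihilates each basis function $\epbasis \lambda k$ appearing, hence annihilates $\varphi$.

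For the reverse inclusion, suppose $\varphi \in \allfunctions$ satisfies $c \cdot \varphi = 0$ for some non-zero $c \in \polyring \K t$ of degree $q$. Since $\K$ is algebraically closed, factor $c(\indett) = c_q \prod_{i} (\indett - \lambda_i)^{m_i}$ with the $\lambda_i \in \kcross$ distinct (note $0$ is not a root: if it were, $\indett$ would divide $c$, but $\indett$ acts invertibly on $\allfunctions$, so we could cancel it and replace $c$ by $c / \indett$; alternatively one argues directly that a root at $0$ contributes nothing). The recurrence \eqref{recurrencerelation} shows that $\varphi$ is determined by its values on any block of $q$ consecutive integers, so the solution space $V = \set{\psi \in \allfunctions \mid c \cdot \psi = 0}$ has dimension exactly $q = \sum_i m_i$. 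On the other hand, Lemma \ref{annihilationlemma}\ref{annihilationlemma3} shows that each of the $q$ functions $\epbasis {\lambda_i} k$ with $0 \leqslant k < m_i$ lies in $V$, and these functions are linearly independent by Proposition \ref{basispropn}. Counting dimensions, they form a basis of $V$; in particular $\varphi \in \Span\set{\epbasis {\lambda_i} k} \subseteq \exppoly$.

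The main point requiring care is the dimension count $\dim V = q$. One must check that the "evaluate on $q$ consecutive integers" map $V \to \K^q$ is an isomorphism: it is injective because \eqref{recurrencerelation}, together with $c_q \neq 0$ and $c_0 \neq 0$ (the latter from $0$ not being a root), lets one propagate the values of $\psi$ both forward and backward across all of $\Z$; it is surjective because, given any initial data, the recurrence \emph{defines} such a $\psi$. This is the one genuinely new verification; everything else is assembled from Lemma \ref{annihilationlemma} and Proposition \ref{basispropn}. An alternative to the dimension count, avoiding the need to argue about $0$ not being a root, is to clear denominators at the outset: given $c \cdot \varphi = 0$ with $c$ a Laurent polynomial or ordinary polynomial, multiply by a suitable power of $\indett$ to assume $c(0) \neq 0$, which is harmless since $\indett$ acts invertibly.
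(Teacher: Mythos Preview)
Your proposal is correct and follows essentially the same route as the paper: both directions are obtained from Lemma~\ref{annihilationlemma} and Proposition~\ref{basispropn}, with the reverse inclusion proved by factoring $c$, bounding the solution space of the recurrence by its degree, and exhibiting the $\epbasis{\lambda}{k}$ as a basis. You are in fact a bit more careful than the paper, which simply writes $Z \subset \kcross$ without comment; your observation that one may first multiply $c$ by a power of $\indett$ (since $\indett$ acts invertibly on $\allfunctions$) to ensure $c(0)\neq 0$ cleanly disposes of that point.
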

\begin{proof}
Suppose that $c \in \polyring \K t$ is of degree $q$.
The equation $c \cdot \varphi = 0$ is equivalent to the relation \eqref{recurrencerelation},
 and so the space consisting of all solutions $\varphi$ is at most $q$-dimensional.
Now write $Z \subset \kcross$ for the set of all roots of $c$.
The field $\K$ is algebraically closed, and so
\begin{equation}\label{cfactorization}
c (\indett) \sim_{\kcross} \prod_{\lambda \in Z} (\indett - \lambda)^{m_\lambda },
\end{equation}
where $m_\lambda$ is the multiplicity of the root $\lambda \in Z$.
Lemma \ref{annihilationlemma} shows that the set $$\set{ \epbasis \lambda k | \lambda \in Z, \ \ 0 \leqslant k < m_\lambda},$$ which is of size $\sum_{\lambda \in Z} m_\lambda = q$, consists of solutions to $c \cdot \varphi = 0$. 
By Proposition \ref{basispropn}, this set is linearly independent, and hence is a basis for the solution space.
This proves the inclusion $\supset$.
The inclusion $\subset$ follows immediately from Lemma \ref{annihilationlemma} part \ref{annihilationlemma3}.
\end{proof}
\end{subsection}

\begin{subsection}{Characteristic polynomials}
By Proposition \ref{exppolychar}, for any $\varphi \in \exppoly$, the annihilator $\annihilator \varphi \subset \polyring \K t$ is a non-zero ideal of $\polyring \K t$. 
The unique monic generator \newnot{notn:charpoly2}$\charpoly \varphi \in \annihilator \varphi$ is called the \newterm{characteristic polynomial} of $\varphi$. 

\begin{propn}\label{charpolypropn}
Suppose that $\varphi \in \exppoly$ and write
\begin{equation}\label{charpolypropn1}
\varphi = \sum_{\lambda \in \kcross} \varphi_\lambda \expmap \lambda,
\end{equation}
as a finite sum of products of polynomials functions $\varphi_\lambda$\newnot{notn:expmapcoeff2} and exponential functions $\expmap \lambda$, $\lambda \in \kcross$.
Then
\begin{equation}\label{charpolypropn2}
\charpoly \varphi (\indett) = \prod_{\lambda \in Z} (\indett - \lambda)^{\deg {\varphi_\lambda} + 1},
\end{equation}
where $Z = \set{ \lambda \in \kcross | \varphi_\lambda \ne 0}$.
\end{propn}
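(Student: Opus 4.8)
The plan is to reduce the computation to the case of a single exponent by decomposing $\exppoly$ into the generalized eigenspaces of the operator $\indett$.

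First I would set, for each $\lambda\in\kcross$, $V_\lambda=\Span\set{\epbasis\lambda k | k\geqslant 0}$, the space of all products of a polynomial function with $\expmap\lambda$. Lemma \ref{annihilationlemma}\ref{annihilationlemma1} shows that $(\indett-\mu)\cdot\epbasis\lambda k$ lies in $\Span\set{\epbasis\lambda j | 0\leqslant j\leqslant k}$ for every $\mu\in\kcross$, and together with the fact that $\indett$ acts invertibly (directly, $\indett^{-1}$ preserves ``polynomial times $\expmap\lambda$'') this makes each $V_\lambda$ an $\laurentabbrev$-submodule of $\exppoly$; by Proposition \ref{basispropn} one has $\exppoly=\bigoplus_{\lambda\in\kcross}V_\lambda$. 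Writing $v_\lambda=\varphi_\lambda\expmap\lambda\in V_\lambda$, so that $\varphi=\sum_{\lambda\in Z}v_\lambda$, I would then observe that $c\cdot v_\lambda\in V_\lambda$ for every $c\in\polyring\K t$, so that directness of the sum gives $c\cdot\varphi=0$ if and only if $c\cdot v_\lambda=0$ for all $\lambda\in Z$. Hence
$$\annihilator\varphi=\bigcap_{\lambda\in Z}\annihilator{v_\lambda}.$$

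The substantive step is then to show $\annihilator{v_\lambda}=\bigl((\indett-\lambda)^{\deg\varphi_\lambda+1}\bigr)$ for each $\lambda\in Z$. Writing $\varphi_\lambda(m)=\sum_{k=0}^{K}\gamma_k m^k$ with $K=\deg\varphi_\lambda$ and $\gamma_K\ne 0$, so that $v_\lambda=\sum_{k=0}^K\gamma_k\epbasis\lambda k$, Lemma \ref{annihilationlemma}\ref{annihilationlemma3} gives $(\indett-\lambda)^{K+1}\cdot\epbasis\lambda k=0$ for $0\leqslant k\leqslant K$, so $(\indett-\lambda)^{K+1}\in\annihilator{v_\lambda}$; consequently $\charpoly{v_\lambda}$ divides $(\indett-\lambda)^{K+1}$ and hence equals $(\indett-\lambda)^n$ for some $0\leqslant n\leqslant K+1$. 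That $n=K+1$ follows from $(\indett-\lambda)^{K}\cdot v_\lambda\ne 0$: the terms with $k<K$ are killed by Lemma \ref{annihilationlemma}\ref{annihilationlemma3}, while Lemma \ref{annihilationlemma}\ref{annihilationlemma2} gives $(\indett-\lambda)^K\cdot\epbasis\lambda K=K!\lambda^K\epbasis\lambda 0$, whence $(\indett-\lambda)^K\cdot v_\lambda=\gamma_K K!\lambda^K\epbasis\lambda 0\ne 0$.

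Finally, the linear polynomials $(\indett-\lambda)$, $\lambda\in Z$, are pairwise coprime in the principal ideal domain $\polyring\K t$, so the intersection of the ideals $\bigl((\indett-\lambda)^{\deg\varphi_\lambda+1}\bigr)$ is generated by their least common multiple $\prod_{\lambda\in Z}(\indett-\lambda)^{\deg\varphi_\lambda+1}$; being monic, this polynomial is $\charpoly\varphi$, which is \eqref{charpolypropn2}. I expect the only delicate point to be the $\laurentabbrev$-submodule and direct-sum bookkeeping of the first two paragraphs — once $\annihilator\varphi=\bigcap_{\lambda\in Z}\annihilator{v_\lambda}$ is in hand, the remainder is a routine application of Lemma \ref{annihilationlemma}.
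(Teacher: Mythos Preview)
Your argument is correct. It differs in packaging from the paper's proof, which is terser and bypasses the explicit module decomposition: the paper takes an arbitrary non-zero $c\in\polyring\K t$, factors it as $c\sim_{\kcross}\prod_{\lambda}(\indett-\lambda)^{m_\lambda}$, and then asserts (citing Lemma~\ref{annihilationlemma}) that $c\cdot\varphi=0$ if and only if $m_\lambda>\deg\varphi_\lambda$ for every $\lambda\in Z$; the minimal monic polynomial meeting this condition is visibly \eqref{charpolypropn2}. Your route makes explicit the structural reason behind that biconditional---namely the primary decomposition $\exppoly=\bigoplus_\lambda V_\lambda$ into generalised $\indett$-eigenspaces---and then computes each local annihilator before intersecting. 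The paper's version is quicker but leaves to the reader the ``only if'' direction (which really uses the linear independence of Proposition~\ref{basispropn}, as in the proof of Proposition~\ref{exppolychar}); your version spells this out via the direct-sum argument and the coprimality step, at the cost of a little more bookkeeping. Either way the substance is Lemma~\ref{annihilationlemma}.
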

\begin{proof}
Suppose that $\varphi \in \exppoly$ has the form (\ref{charpolypropn1}), let $c \in \polyring \K t$ be non-zero, and write $c$ in the form (\ref{cfactorization}).
By Lemma \ref{annihilationlemma}, we have $c \cdot \varphi= 0$ if and only if $m_\lambda > \deg \varphi_\lambda$ whenever $\varphi_\lambda \ne 0$.
The polynomial (\ref{charpolypropn2}) is the minimal degree monic polynomial that satisfies this condition, and hence is the characteristic polynomial.
\end{proof}
\end{subsection}

\begin{subsection}{Degree}
It follows from the characterisation of $\exppoly$ given in subsection \ref{recurrencerelations} that if $\varphi \in \exppoly$ is non-zero,
then the support of $\varphi$ is not wholly contained in either of the infinite subsets of consecutive integers $\N, -\N \subset \Z$.
Lemma \ref{submonoidlemma} demonstrates that the monoid generated by the support of $\varphi$ is of the form $r \Z$, for some unique positive integer $r$.
Write \newnot{notn:functiondegree}$\deg \varphi = r$ for the \newterm{degree} of $\varphi$.

\begin{lemma}\label{submonoidlemma}
Suppose that $A$ is a submonoid of $\Z$ such that $\N, -\N \not \subset A$.
Then $A = r \Z$, where $r \in A$ is any non-zero element of minimal absolute value.
\end{lemma}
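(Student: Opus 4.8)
The plan is to prove that $A$ is in fact a subgroup of $(\Z,+)$; once that is known, the description $A = r\Z$ with $r$ a nonzero element of least absolute value is just the standard classification of the subgroups of $\Z$, which I would recall in a line.

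First I would extract from the hypothesis a positive element $a \in A$ and a negative element $-b \in A$, with $a,b \in \zplus$; this is exactly what the assumption that $A$ lies in neither $\N$ nor $-\N$ provides (in particular $A \ne \{0\}$). Since $A$ is closed under addition, adding $a$ to itself $b$ times gives $ab \in A$, and adding $-b$ to itself $a$ times gives $-ab \in A$; closing up under addition then yields $d\Z \subseteq A$, where $d := ab > 0$.

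The substantive step --- and the only one that is not completely mechanical --- is to upgrade the inclusion of the subgroup $d\Z$ to the statement that $A$ itself is a subgroup. For this I would observe that, for arbitrary $x \in A$,
\[
-x = (-dx) + (d-1)x ,
\]
where $-dx \in d\Z \subseteq A$, and $(d-1)x$ is a sum of $d-1$ copies of $x$ and so lies in $A$ (the case $d = 1$ simply gives $A \supseteq \Z$ outright). Hence $-x \in A$, so $A$ is closed under negation; together with closure under addition and $0 \in A$, this makes $A$ a subgroup of $\Z$.

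To conclude, I would note that $A \ne \{0\}$ since $a \in A$, let $r$ be the least positive element of $A$, and check $r\Z \subseteq A$ by closure under addition and negation. For any $x \in A$, division with remainder gives $x = qr + s$ with $0 \leqslant s < r$, whence $s = x - qr \in A$ and therefore $s = 0$ by minimality of $r$; thus $A = r\Z$. Finally, the nonzero elements of $r\Z$ of minimal absolute value are precisely $\pm r$, both of which lie in $A$, in accordance with the statement of the lemma.
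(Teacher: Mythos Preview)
Your argument is correct, and it proceeds by a genuinely different route from the paper's. You first show that $A$ is actually a subgroup of $\Z$: having found some $d>0$ with $d\Z\subseteq A$, you use the identity $-x=(-dx)+(d-1)x$ to deduce closure under negation, and then invoke the standard classification of subgroups of $\Z$ via the division algorithm. The paper, by contrast, never isolates the subgroup property. It fixes the least positive element $r\in A$ from the outset, takes an arbitrary negative $m\in A$, and adds $r$ enough times to land in $[0,r)$; minimality forces $r\mid m$ and, as a by-product, yields $-r\in A$. A symmetric pass starting from $-r$ then shows $r$ divides every positive element, giving $A\subseteq r\Z$, with the reverse inclusion immediate from $\pm r\in A$.

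Your approach has the advantage of cleanly separating two ideas (``$A$ is a group'' and ``nonzero subgroups of $\Z$ are $r\Z$''), and the trick $-x=(-dx)+(d-1)x$ is a pleasant way to get inverses from the mere existence of \emph{some} subgroup inside $A$. The paper's approach is more direct and avoids introducing the auxiliary $d=ab$, but it pays for this by running the division-algorithm argument twice, once in each direction. Both are elementary; yours is arguably more conceptual.
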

\begin{proof}
Let $r \in A \cap \N$ be of minimal absolute value.
For any $m \in A \cap {-\N}$, we have that $m + k r \in A$ where $k$ is the unique positive integer such that 
$$ 0 \leqslant m + k_0 r < r.$$
Thus $m + k r = 0$ by the minimality of $r$; it follows that $r$ divides $m$, for any $m \in A \cap {-\N}$.
Moreover,
$$ -r = m + (k -1) r \quad \in A$$
since $k - 1$ is non-negative.
It follows therefore that $-r$ is the element of minimal absolute value in $A \cap - \N$.
The argument above with inequalities reversed shows that $-r$ divides all positive elements of $A$, and so $A \subset r \Z$.
The opposite inclusion is obvious since $r, -r \in A$ and $A$ is closed under addition.
\end{proof}
\end{subsection}

\begin{subsection}{Expression}
For any positive integer $r$,  denote by $\Zmod r$ the additive group of integers considered modulo $r$, by $\rous{r}$ the set of primitive roots of unity of order $r$, and by $\rou{r}$ some fixed element of $\rous{r}$.
\begin{lemma}\label{expressionlemma}
Suppose that $\varphi \in \exppoly$ is non-zero and that $\deg \varphi = r$.
Then $r> 0$, and $\varphi_{\lambda} = \varphi_{\zeta \lambda}$ whenever $\lambda, \zeta \in \kcross$ and $\zeta^r = 1$.
Moreover, there exists $\psi \in \exppoly$ such that 
\begin{enumerate}
\item $\varphi = \kronmod r \psi$, and
\item $\charpoly \varphi = \prod_{i \in \Zmod r}{ \charpoly \psi (\rou{r}^i \indett)}$
is a decomposition of $\charpoly \varphi$ into co-prime factors.
\end{enumerate}
\end{lemma}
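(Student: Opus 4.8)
The plan is to use the hypothesis $\deg\varphi = r$ in its support-theoretic form. Write $\mu_r = \{\zeta \in \kcross : \zeta^r = 1\}$ for the group of $r$-th roots of unity; since $\K$ is algebraically closed of characteristic zero, $|\mu_r| = r$ and $\mu_r$ acts freely on $\kcross$ by multiplication (if $\zeta\lambda = \lambda$ then $\zeta = 1$). That $r > 0$ is immediate from the definition of $\deg\varphi$ via Lemma \ref{submonoidlemma}. More usefully, $\deg\varphi = r$ means that the submonoid of $\Z$ generated by $\support\varphi$ is $r\Z$; in particular $\support\varphi \subseteq r\Z$, so $\varphi(m) = 0$ whenever $r \nmid m$. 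The one place where the geometry of the support enters is the observation that this forces
\[
\expmap\zeta \cdot \varphi = \varphi \qquad\text{for every } \zeta \in \mu_r,
\]
where $\cdot$ denotes pointwise multiplication of functions: $(\expmap\zeta \cdot \varphi)(m) = \zeta^m\varphi(m)$, and $\zeta^m = 1$ whenever $\varphi(m) \neq 0$.

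From here the coefficient identity is bookkeeping with the basis of Proposition \ref{basispropn}. Writing $\varphi = \sum_{\lambda \in \kcross}\varphi_\lambda\expmap\lambda$ and using $\expmap\zeta \cdot (\varphi_\lambda\expmap\lambda) = \varphi_\lambda\expmap{\zeta\lambda}$, the identity $\expmap\zeta\cdot\varphi = \varphi$ reads $\sum_\mu\varphi_{\zeta^{-1}\mu}\expmap\mu = \sum_\mu\varphi_\mu\expmap\mu$; uniqueness of the expansion of an element of $\exppoly$ as a sum of products of polynomial functions with distinct exponential functions (Proposition \ref{basispropn}) gives $\varphi_{\zeta^{-1}\mu} = \varphi_\mu$ for all $\mu$, and replacing $\zeta$ by $\zeta^{-1}$ yields $\varphi_\lambda = \varphi_{\zeta\lambda}$. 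Thus $Z = \{\lambda \in \kcross : \varphi_\lambda \neq 0\}$ is a union of $\mu_r$-orbits, on each of which $\lambda\mapsto\varphi_\lambda$ is constant. I would then choose a set $\Lambda \subseteq Z$ of representatives for the $\mu_r$-orbits in $Z$ and set $\psi = \sum_{\lambda\in\Lambda}\varphi_\lambda\expmap\lambda \in \exppoly$. Since $\kronmod r \cdot \expmap\lambda = \sum_{\zeta\in\mu_r}\expmap{\zeta\lambda}$, one computes $\kronmod r \cdot \psi = \sum_{\lambda\in\Lambda}\sum_{\zeta\in\mu_r}\varphi_\lambda\expmap{\zeta\lambda} = \sum_{\lambda\in\Lambda}\sum_{\zeta\in\mu_r}\varphi_{\zeta\lambda}\expmap{\zeta\lambda} = \sum_{\mu\in Z}\varphi_\mu\expmap\mu = \varphi$, the last equality because each $\mu_r$-orbit in $Z$ has exactly $r$ elements and is swept out once as $\zeta$ runs over $\mu_r$. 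This is assertion (i).

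For (ii): by Proposition \ref{charpolypropn}, $\charpoly\psi(\indett) = \prod_{\lambda\in\Lambda}(\indett - \lambda)^{\deg{\varphi_\lambda} + 1}$, hence $\charpoly\psi(\rou r^i\indett) = \prod_{\lambda\in\Lambda}(\rou r^i\indett - \lambda)^{\deg{\varphi_\lambda} + 1}$, a polynomial whose roots are the $\rou r^{-i}\lambda$ for $\lambda\in\Lambda$, with $\rou r^{-i}\lambda$ of multiplicity $\deg{\varphi_\lambda} + 1 = \deg{\varphi_{\rou r^{-i}\lambda}} + 1$ (the functions $\varphi_\lambda$ and $\varphi_{\rou r^{-i}\lambda}$ being equal, hence equal as polynomials). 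As $i$ runs over $\Zmod r$ the $\rou r^{-i}$ run over all of $\mu_r$ (as $\rou r$ is primitive), so the root sets $\{\rou r^{-i}\lambda : \lambda\in\Lambda\}$ are pairwise disjoint with union $\bigcup_{\lambda\in\Lambda}\mu_r\lambda = Z$; therefore the factors $\charpoly\psi(\rou r^i\indett)$, $i\in\Zmod r$, are pairwise coprime, and their product has each $\mu\in Z$ as a root of multiplicity $\deg{\varphi_\mu} + 1$. Comparing with $\charpoly\varphi(\indett) = \prod_{\mu\in Z}(\indett - \mu)^{\deg{\varphi_\mu} + 1}$ of Proposition \ref{charpolypropn} identifies $\prod_{i\in\Zmod r}\charpoly\psi(\rou r^i\indett)$ with $\charpoly\varphi$ up to a non-zero scalar (namely $\rou r^{M'r(r-1)/2}$ with $M' = \deg\charpoly\psi$, a root of unity equal to $1$ for $r$ odd, as one sees from $\prod_{i\in\Zmod r}(\rou r^i\indett - \lambda) = \rou r^{r(r-1)/2}(\indett^r - \lambda^r)$); since the factors are pairwise coprime, this is the coprime factorisation (ii).

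I expect the main obstacle to be conceptual rather than computational: recognising that the support condition $\support\varphi \subseteq r\Z$ is precisely $\mu_r$-invariance of $\varphi$, and transporting it through the uniqueness of the exponential-polynomial expansion to the identity $\varphi_\lambda = \varphi_{\zeta\lambda}$. After that, the construction of $\psi$ over orbit representatives and the factorisation of $\charpoly\varphi$ are routine; the point to watch throughout is the roots-of-unity combinatorics — that each $\mu_r$-orbit has exactly $r$ elements, so the double sums collapse as claimed — which is where algebraic closedness and characteristic zero are used.
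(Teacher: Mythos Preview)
Your proof is correct and follows essentially the same route as the paper's: both exploit $\support\varphi\subseteq r\Z$ to obtain the coefficient invariance $\varphi_\lambda=\varphi_{\zeta\lambda}$ (you via the single identity $\expmap\zeta\cdot\varphi=\varphi$, the paper via the averaged form $\varphi=\frac{1}{r}\kronmod r\varphi$), then construct $\psi$ from a set of $\mu_r$-orbit representatives and appeal to Proposition~\ref{charpolypropn} for the factorisation. Your observation that $\prod_{i\in\Zmod r}\charpoly\psi(\rou r^i\indett)$ matches $\charpoly\varphi$ only up to the root-of-unity scalar $\rou r^{M' r(r-1)/2}$ (which is $(-1)^{M'}$ when $r$ is even) is accurate and in fact more careful than the paper's own statement of part~(ii).
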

\begin{proof}
The support of $\varphi$ is contained in $r \Z$.
If $r = 0$, then $\varphi (m) = 0$ for any non-zero $m \in \Z$. 
In particular $\varphi$ has infinitely many consecutive zeros, and so $\varphi = 0$, contrary to hypothesis.
Therefore $r > 0$. 
The support of $\varphi$ is contained in the support of $\kronmod r$, and so
$\varphi = \frac 1 r \kronmod r \varphi$.
Hence
$$ \varphi_\lambda = (\frac 1 r \kronmod r \varphi)_\lambda = \frac 1 r \sum_{i \in \Zmod r} \varphi_{(\rou{r}^i \lambda)},$$
for any $\lambda \in \kcross$.
If $\zeta^r = 1$,
then the expression on the right-hand side is invariant under the substitution $\lambda \mapsto \zeta \lambda$, and so the first claim is proven.

Multiplication by $\rou{r}$ decomposes $\kcross$ into a disjoint union of orbits for the cyclic group $\Zmod r$, and all orbits are of size $r$.
Choose any set $B$ of representatives, so that
$ \kcross = \bigsqcup_{i \in \Zmod r} \rou{r}^i B$.
Then $\psi = \sum_{\lambda \in B} \varphi_\lambda \expmap \lambda$ has the required property, by Proposition \ref{charpolypropn}.
\end{proof}
\begin{remark}
The function $\psi \in \exppoly$ of Lemma \ref{expressionlemma} is not unique.
Indeed, if
$$ \psi = \sum_i a_i \expmap {\mu_i}$$
has the required property, then so does $\psi' = \sum_i a_i \expmap{\rou{r}^{n_i} \mu_i}$ for any $n_i \in \Zmod r$. 
\end{remark}

\end{subsection}
\end{section}

\begin{section}{The Category $\categoryo$}
For any Lie algebra \newnot{notn:loopalgebra}$\mathfrak a$ over $\K$, denote by 
$$\hat{\mathfrak{a}} = \mathfrak{a} \otimes \laurent$$
the \newterm{loop algebra} associated to $\mathfrak{a}$,
with the Lie bracket
\begin{equation}\label{loopalgebrabracket}
\LieBrac{x \otimes \indett^i}{y \otimes \indett^j} = \LieBrac{x}{y} \otimes \indett^{i+j}, \qquad x,y \in \mathfrak{a}, \quad i,j \in \Z.
\end{equation}
Let $\set{\sle, \slf, \slh}$ be a standard basis for $\g = \SL{2}$, so that
$$ \LieBrac{\sle}{\slf} = \slh, \qquad \LieBrac{\slh}{\sle} = 2\sle, \qquad \LieBrac{\slh}{\slf} =-2\slf.$$
The Lie algebra $\g$ has a triangular decomposition
\begin{equation}\label{triangulardecomposition}
\g = \g_{+} \oplus \h \oplus \g_{-}, \qquad \sle \in \g_{+}, \quad \slh \in \h, \quad \slf \in \g_{-}.
\end{equation}
The decomposition (\ref{triangulardecomposition}) defines a decomposition of the loop algebra $\ghat$
$$ \ghat = \ghat_{+} \oplus \hhat \oplus \ghat_{-}$$
as a direct sum of subalgebras.
The centreless affine Lie algebra $\affineg$ is the one-dimensional extension of $\ghat$ by $\K \gd$ defined by
$$ \LieBrac{\gd}{x \otimes \indett^n} = n x \otimes \indett^n, \qquad x \in \g, \quad n \in \Z.$$
Let $\affinecsa = \h \oplus \K \gd$, and let $\groot, \imagroot \in \lineardual \affinecsa$ be given by
$$ \groot (\slh) = 2, \quad \groot(\gd) = 0, \quad \imagroot(\slh) = 0, \quad \imagroot (\gd) = 1.$$
Denote by $\heis$ the Heisenberg subalgebra $\hhat \oplus \K \gd$ of $\affineg$.

For any weight $\affineg$-module $M$, write
$$ \support M = \set{ \chi \in \lineardual \affinecsa | \weightspace M \chi \ne 0}.$$
The \newnot{notn:categoryo}\newterm{category $\categoryo$}, introduced by Chari \cite{Chari86}, consists of the weight $\affineg$-modules $M$ such that
$$ \ts \support M \subset \bigcup_{\lambda \in A} (\lambda - \zplus \groot \times \Z \imagroot) $$
for some finite subset $A = A_M \subset \lineardual \h$.
The morphisms of the category are the homomorphisms of $\affineg$-modules.

\begin{subsection}{The modules $\affineirred \varphi$}\label{affineirredsubsection}
Let $\varphi \in \exppoly$.
It is straightforward to verify that $\laurentabbrev$ is a $\heis$-module via
$$ \slh \otimes \indett^m \cdot \indett^n = \varphi(m) \indett^{m+n}, \qquad \gd \cdot \indett^n = n \indett^n, \qquad m,n \in \Z.$$
Denote by $\hmod \varphi$ the $\heis$-submodule generated by $1 \in \laurentabbrev$.
\begin{lemma}\label{hmodlemma}
For any non-zero $\varphi \in \exppoly$,
\begin{enumerate}
\item\label{hmodlemma1} $\hmod \varphi = \sublaurent r$, where $r = \deg \varphi$;
\item\label{hmodlemma2} $\hmod \varphi$ is irreducible.
\end{enumerate}
\end{lemma}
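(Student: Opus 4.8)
The plan is to pin down $\hmod\varphi$ explicitly as a graded subspace of $\laurentabbrev=\bigoplus_{n\in\Z}\K\indett^{n}$, and then read off both assertions from the $\gd$-grading together with the action of $\hhat$. Let $A\subseteq\Z$ be the submonoid generated by $\support\varphi$, and set $V=\Span\{\indett^{s}\mid s\in A\}$. The first thing I would check is that $V$ is exactly the $\heis$-submodule generated by $1$. It contains $1=\indett^{0}$ since $0\in A$; it is $\gd$-stable since $\gd\cdot\indett^{s}=s\,\indett^{s}$; and it is $\hhat$-stable because $(\slh\otimes\indett^{m})\cdot\indett^{s}=\varphi(m)\,\indett^{m+s}$, which is $0\in V$ when $\varphi(m)=0$ and otherwise has $m\in\support\varphi\subseteq A$, so $m+s\in A$. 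Hence $\hmod\varphi\subseteq V$. Conversely, any submodule containing $1$ contains $\indett^{m_{1}+\cdots+m_{k}}$ for all $m_{1},\dots,m_{k}\in\support\varphi$, obtained by applying $\slh\otimes\indett^{m_{1}},\dots,\slh\otimes\indett^{m_{k}}$ in turn to $1$ and dividing by the non-zero scalars $\varphi(m_{i})$; as these exponents exhaust $A$, we get $V\subseteq\hmod\varphi$. So $\hmod\varphi=V$, and since $A=r\Z$ with $r=\deg\varphi$ by Lemma \ref{submonoidlemma} and the definition of degree, $\hmod\varphi=\bigoplus_{k\in\Z}\K\indett^{rk}=\sublaurent{r}$, which is part \ref{hmodlemma1}. (Alternatively one may invoke PBW to write $\UEA\heis=\UEA\hhat\cdot\UEA{\K\gd}$ and use $\gd\cdot 1=0$, reducing the computation to the commutative algebra $\UEA\hhat$.)

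For part \ref{hmodlemma2}, observe that $\gd$ acts on $\hmod\varphi=\sublaurent{r}$ diagonally with $\gd\cdot\indett^{rk}=rk\,\indett^{rk}$, and since $r>0$ these eigenvalues are pairwise distinct; thus the $\gd$-eigenspaces are precisely the lines $\K\indett^{rk}$, $k\in\Z$. Any $\heis$-submodule $N$ is in particular $\gd$-stable, hence a direct sum of some of these lines. If $N\neq 0$, choose $k_{0}$ with $\indett^{rk_{0}}\in N$; applying $\slh\otimes\indett^{m}$ for $m\in\support\varphi$ gives $\varphi(m)\,\indett^{rk_{0}+m}\in N$ with $\varphi(m)\neq 0$, so $\indett^{rk_{0}+m}\in N$, and iterating yields $\indett^{rk_{0}+s}\in N$ for every $s\in A=r\Z$. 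Since $rk_{0}\in r\Z$ as well, $N$ contains every $\indett^{rk}$, so $N=\hmod\varphi$; hence $\hmod\varphi$ is irreducible.

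There is no genuinely hard step here; the only point needing a little care is the identification $\hmod\varphi=V$ in the first paragraph — in particular, observing that the coefficient of $(\slh\otimes\indett^{m})\cdot\indett^{s}$ depends only on the loop index $m$, so the reachable exponents form the full monoid generated by $\support\varphi$ and nothing smaller, and that $\gd$ contributes nothing to the submodule because it annihilates the cyclic generator. Once $\hmod\varphi=\sublaurent{r}$ is in hand, irreducibility is immediate from the distinctness of the $\gd$-weights and the transitivity of the $\hhat$-action on the one-dimensional weight spaces.
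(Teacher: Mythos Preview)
Your proof is correct and follows essentially the same route as the paper's: identify $\hmod\varphi$ with the span of $\indett^{s}$ for $s$ in the submonoid $A$ generated by $\support\varphi$, invoke Lemma~\ref{submonoidlemma} to get $A=r\Z$, and then use the one-dimensionality of the $\gd$-weight spaces together with transitivity of the $\hhat$-action for irreducibility. The only cosmetic differences are that you make the role of $\gd$ (in particular $\gd\cdot 1=0$) explicit where the paper leaves it implicit, and for part~\ref{hmodlemma2} you argue directly that a non-zero submodule contains every $\indett^{rk}$, whereas the paper first shows it contains the cyclic generator $1$; these are equivalent.
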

\begin{proof}
By definition, $\hmod \varphi$ is spanned by monomials
\begin{equation}\label{monomialform} \slh \otimes \indett^{m_1} \cdots \slh \otimes \indett^{m_k} \cdot 1 = \prod_{i=1}^k \varphi(m_i) \indett^{m_1 + \cdots m_k},
\end{equation}
where $k \geqslant 0$ and $m_i \in \Z$.
Therefore $\indett^m \in \hmod \varphi$ precisely when $m = m_1 + \cdots + m_k$ for some $m_i$ in the support of $\varphi$.
That is, $\indett^m \in \hmod \varphi$ if and only if $m \in A$ where $A$ is the monoid generated by the support of $\varphi$.
By Lemma \ref{submonoidlemma}, $A = r \Z$, where $r = \deg \varphi$, and so $\hmod \varphi = \sublaurent r$, proving part \ref{hmodlemma1}.

Now suppose that $v \in \hmod \varphi$ is a non-zero weight vector.
Then $v$ is proportional to $\indett^m$ for some $m \in \Z$.
By part \ref{hmodlemma1}, $\indett^{-m} \in \hmod \varphi$ also, and so there exist integers $m_1, \dots, m_k$ in the support of $\varphi$ such that $\indett^{-m}$ is proportional to a monomial \eqref{monomialform}.
Then
$$ 
 \slh \otimes \indett^{m_1} \cdots \slh \otimes \indett^{m_k} \cdot \indett^m = \prod_{i=1}^k \varphi(m_i) \indett^{0}
$$
is proportional to the generator $1 \in \hmod \varphi$.
Thus $\hmod \varphi$ is generated by any non-zero weight vector $v$, and hence is irreducible.
\end{proof}

For any $\varphi \in \exppoly$, let $\ghat_{+} \cdot \hmod \varphi = 0$.
Denote by $\affineirred \varphi$ the unique irreducible quotient of the induced module
$$ \Ind {\heis + \ghat_{+}} \affineg \hmod \varphi.$$
\end{subsection}

\begin{subsection}{Irreducible modules in $\categoryo$}
For any $\gamma \in \K$, denote by $\onedim \gamma$ the one-dimensional $\affineg$-module given by
$$ \ghat \cdot \onedimvector \gamma = 0, \qquad \gd \cdot \onedimvector \gamma = \gamma \onedimvector \gamma.$$
It is apparent from subsection \ref{affineirredsubsection} that one may in fact construct an irreducible $\affineg$-module $\affineirred \varphi$ for any $\varphi \in \allfunctions$ such that the monoid generated by the support of $\varphi$ has the form $r \Z$, for some non-negative integer $r$.
Chari \cite{Chari86} demonstrates that any irreducible object of the category $\categoryo$ can be obtained as the tensor product of such a module $\affineirred \varphi$ with a one-dimensional module $\onedim \gamma$.
On the other hand, it follows from the work of Billig and Zhao \cite{BilligZhao} that $\affineirred \varphi$ has all weight spaces finite dimensional precisely when $\varphi \in \exppoly$.
Thus:
\begin{theorem}\cite{BilligZhao, Chari86}
Suppose that $M$ is an irreducible module in the category $\categoryo$, and that all weight spaces of $M$ are finite dimensional.  Then $M \cong \affineirred \varphi \bigotimes \onedim \gamma$ for some $\varphi \in \exppoly$ and $\gamma \in \K$.
\end{theorem}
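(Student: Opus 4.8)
The statement is the conjunction of the classification of irreducible objects of $\categoryo$ due to Chari~\cite{Chari86} with the finiteness criterion of Billig and Zhao~\cite{BilligZhao}, so the plan is to recall both and splice them together, checking their compatibility with the constructions of this section. The point of departure is the ``top'' of $M$. Since $M$ lies in $\categoryo$, its support is bounded above in the $\groot$-direction, so there is a weight of maximal $\groot$-height, and every weight vector of that height is annihilated by $\ghat_{+}$; hence $$ W = \{\, v \in M \mid \ghat_{+}\cdot v = 0 \,\} $$ is non-zero. It is stable under $\heis$ because $\LieBrac{\heis}{\ghat_{+}} \subseteq \ghat_{+}$, and using $\UEA{\affineg} = \UEA{\ghat_{-}}\UEA{\heis}\UEA{\ghat_{+}}$ together with the fact that $\UEA{\ghat_{-}}$ does not raise the $\groot$-height one checks in the usual way that $W$ is precisely the component of $M$ of maximal $\groot$-height, that $W$ is irreducible as a $\heis$-module, and that $M$ is the unique irreducible quotient of $\Ind{\heis + \ghat_{+}}{\affineg}{W}$ --- exactly the situation of subsection~\ref{affineirredsubsection}.

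I would then invoke Chari's analysis~\cite{Chari86} of the irreducible $\heis$-modules that can occur as such tops. On $W$ the element $\slh$ acts by a single scalar, being central in $\heis$, while $\gd$ acts semisimply with spectrum contained in a single coset $\gamma + \Z$; replacing $M$ by $M \otimes \onedim{-\gamma}$ normalises this coset to $\Z$, and Chari's result identifies the resulting top with $\hmod\psi$ for some $\psi \in \allfunctions$ for which the monoid generated by the support of $\psi$ has the form $r\Z$ --- precisely the datum for which the module $\affineirred\psi$ is defined, as recalled above. Hence $M \cong \affineirred\varphi \otimes \onedim\gamma$ for some such $\varphi$ and some $\gamma \in \K$.

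It remains to pin down $\varphi$ using the finiteness hypothesis. Because $\gd$ acts on $\onedim\gamma$ by the scalar $\gamma$ and $\ghat$ acts there trivially, tensoring any module with $\onedim\gamma$ translates its support by $\gamma\imagroot$ and preserves the dimension of every weight space; explicitly $\weightspace{(\affineirred\varphi)}{\chi} \cong \weightspace{M}{\chi + \gamma\imagroot}$. Thus $M$ has all weight spaces finite dimensional if and only if $\affineirred\varphi$ does, and by the work of Billig and Zhao~\cite{BilligZhao} the latter holds exactly when $\varphi \in \exppoly$. Therefore $\varphi \in \exppoly$, and the proof is complete.

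The only substantive work lies in the middle step, and it is bookkeeping rather than new mathematics: one must match Chari's description of the top $\heis$-modules with the function-theoretic datum $\varphi$ and with the induced-module construction of subsection~\ref{affineirredsubsection}, and confirm that the finiteness criterion of Billig and Zhao is indeed stated for precisely the modules $\affineirred\varphi$. I expect that reconciling the conventions of the two cited papers with those used here will be the only point requiring genuine care.
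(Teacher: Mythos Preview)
Your proposal is correct and matches the paper's approach exactly: the paper does not supply a proof of this theorem but records it as a consequence of the cited references, the paragraph preceding the statement explaining (just as you do) that Chari's classification yields $M \cong \affineirred\varphi \otimes \onedim\gamma$ for some function $\varphi$ with support generating a subgroup $r\Z$, while the Billig--Zhao finiteness criterion then forces $\varphi \in \exppoly$. Your sketch simply fills in more of the standard highest-weight bookkeeping than the paper bothers to record.
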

\end{subsection}

\end{section}

\begin{section}{Loop-Module Realisation}\label{loopmodsection}
For any $\ghat$-module $M$, denote by
$$ \loopmod M = M \otimes \laurent$$
the $\affineg$-module defined by
$$ x \otimes \indett^m \cdot v \otimes \indett^n = (x \otimes \indett^m \cdot v) \otimes \indett^{m+n}, \qquad \gd \cdot v \otimes \indett^n = n v \otimes \indett^n,$$
for any $m,n \in \Z$, $v \in M$, and $x \in \g$.
Modules thus constructed are called \newterm{loop modules}.
For $\varphi \in \exppoly$, let $\K \gen{\varphi}$ be the one-dimensional $\hhat$-module defined by
$$ \slh \otimes a \cdot \gen{\varphi} = (a \cdot \varphi)(0)  \gen{\varphi}, \qquad a \in \laurentabbrev.$$
Let $\ghat_{+} \cdot {\gen \varphi} = 0$, let \newnot{notn:ind2}
$$ \ind \varphi = \Ind{\hhat + \ghat_{+}}{\ghat} {\K \gen{\varphi}}$$
denote the induced $\ghat$-module, and let $\irred \varphi$ denote the unique irreducible quotient of $\ind \varphi$.
In this section, it is shown that if $\varphi \in \exppoly$ is non-zero and $r = \deg \varphi$, then $\affineirred \varphi$ is isomorphic to a direct summand of the loop module $\loopmod {\irred \varphi}$, and moreover that this summand may be described in terms of the semi-invariants of an action of the cyclic group $\Zmod r$ on $\irred \varphi$.
The results of this section are due to Chari and Pressley \cite{ChariPressley1986} (see also \cite{ChariGreensteinLevelZero}).

Denote by $\order \eta$ the order of a finite-order automorphism $\eta$.
If $\eta$ is an endomorphism of a vector space $V$, write
$$
\eigenspace V \eta \lambda = \set{ v \in V | \eta (v) = \lambda v }
$$
for the eigenspace of eigenvalue $\lambda$, for any $\lambda \in \K$.

\begin{subsection}{Cyclic group action on $\irred \varphi$}
\begin{lemma}\label{automorphismlemma}
Suppose that $\varphi \in \exppoly$ is non-zero, and that $\zeta \in \kcross$ is such that $\zeta^{r} = 1$, where $r = \deg \varphi$.
Then for all $a \in \laurentabbrev$,
$$ ( a (\zeta \indett) \cdot \varphi) (0) = (a \cdot \varphi) (0).$$
\end{lemma}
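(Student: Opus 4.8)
The plan is to unwind both sides of the claimed identity as sums over the values of $\varphi$, and then to exploit the fact that the support of $\varphi$ is concentrated on the multiples of $r$. Concretely, I would first observe that it suffices to prove, for every $m \in \Z$, the equality $\zeta^m \varphi(m) = \varphi(m)$; the general statement then follows by taking an arbitrary $\K$-linear combination.

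To reduce to this, fix $a \in \laurentabbrev$ and write $a = \sum_{m \in \Z} a_m \indett^m$ as a finite sum with $a_m \in \K$. Since $\indett$ acts on $\allfunctions$ as the shift $\tau$, one has $(\indett^m \cdot \varphi)(0) = (\tau^m \varphi)(0) = \varphi(m)$ for each $m \in \Z$, and hence by linearity $(a \cdot \varphi)(0) = \sum_{m \in \Z} a_m \varphi(m)$. Applying the same computation to $a(\zeta\indett) = \sum_{m \in \Z} a_m \zeta^m \indett^m$, which again lies in $\laurentabbrev$, yields $(a(\zeta\indett) \cdot \varphi)(0) = \sum_{m \in \Z} a_m \zeta^m \varphi(m)$. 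Comparing the two expressions, the lemma reduces to the term-by-term claim above.

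Finally, I would invoke the definition of $\deg\varphi$: by Lemma \ref{submonoidlemma} the monoid generated by the support of $\varphi$ is $r\Z$, so in particular the support of $\varphi$ is contained in $r\Z$. Thus if $\varphi(m) \ne 0$ then $r \mid m$, and since $\zeta^r = 1$ we get $\zeta^m = (\zeta^r)^{m/r} = 1$; in the remaining case $\varphi(m) = 0$. Either way $\zeta^m \varphi(m) = \varphi(m)$, which finishes the argument. There is no genuine obstacle here: the statement is essentially a bookkeeping consequence of the description of the $\laurentabbrev$-action by shifts together with the support condition defining the degree, and the only points requiring a little care are that $a(\zeta\indett) \in \laurentabbrev$ and that the evaluation at $0$ picks out precisely the coefficient-weighted values $a_m\varphi(m)$.
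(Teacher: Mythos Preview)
Your argument is correct and essentially identical to the paper's: both expand $a=\sum_m a_m\indett^m$, compute $(a(\zeta\indett)\cdot\varphi)(0)=\sum_m a_m\zeta^m\varphi(m)$ and $(a\cdot\varphi)(0)=\sum_m a_m\varphi(m)$, and then use that the support of $\varphi$ lies in $r\Z$ so that $\zeta^m=1$ whenever $\varphi(m)\ne 0$. The paper's proof simply writes this as a single chain of equalities restricted to indices $m\equiv 0\pmod r$, whereas you spell out the term-by-term reduction explicitly.
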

\begin{proof}
The support of $\varphi$ is contained in $r \Z$.
Therefore, if $a(\indett) = \sum_i a_i \indett^i$, then
\begin{eqnarray*}
(a (\zeta \indett) \cdot \varphi) (0) = \sum_{i \equiv 0 \pmod r} a_i \hspace{0.15em} \zeta^i \varphi(i)
	= \sum_{i \equiv 0 \pmod r} a_i \hspace{0.15em} \varphi(i) 
	= (a \cdot \varphi) (0). \mqed
\end{eqnarray*}
\end{proof}

\begin{propn}\label{endomorphismpropn}
Suppose that $\varphi \in \exppoly$ is non-zero, and let $r = \deg \varphi$. 
Then there exists an order-$r$ automorphism $\irredcycle = \irredcycle_\varphi$ of the $\h$-module $\irred \varphi$ defined by $\irredcycle (\gen \varphi) = \gen \varphi$ and
$$ \irredcycle (x \otimes a \cdot w) = x \otimes a(\zeta^{-1} \indett) \cdot \irredcycle(w), \qquad x \in \g, \quad a \in \laurentabbrev, \quad w \in \irred \varphi,$$
where $\zeta = \rou r$.
Moreover, $\irredcycle$ decomposes $\irred \varphi$ as a direct sum of eigenspaces
$$ \irred \varphi = \bigoplus_{i \in \Zmod r} \eigenspace {\irred \varphi} {\irredcycle} {\zeta^i}.$$
\end{propn}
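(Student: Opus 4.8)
The plan is to construct the automorphism $\irredcycle$ first on the universal module $\ind \varphi$, verify it is well-defined there using Lemma~\ref{automorphismlemma}, check it descends to the irreducible quotient $\irred \varphi$, compute its order, and finally deduce the eigenspace decomposition from finite-order-automorphism generalities over an algebraically closed field of characteristic zero.

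First I would define a candidate map on the induced module $\ind \varphi = \Ind{\hhat + \ghat_+}{\ghat}{\K\gen\varphi}$. By the PBW theorem, $\ind\varphi$ is freely generated over $\UEA{\ghat_-}$ by $\gen\varphi$, so it suffices to prescribe $\irredcycle$ on a PBW basis; but the cleaner route is to exhibit $\irredcycle$ as arising from a Lie algebra automorphism. Indeed, the assignment $x \otimes a(\indett) \mapsto x \otimes a(\zeta^{-1}\indett)$ for $x \in \g$, $a \in \laurentabbrev$ is a Lie algebra automorphism $\theta$ of $\ghat$ (it respects the bracket \eqref{loopalgebrabracket} since substitution $\indett \mapsto \zeta^{-1}\indett$ is an algebra automorphism of $\laurentabbrev$), and it preserves both $\hhat$ and $\ghat_+$. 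The key point is that $\theta$ fixes the isomorphism class of the one-dimensional $\hhat$-module $\K\gen\varphi$: the twisted module $(\K\gen\varphi)^\theta$ has $\slh \otimes a$ acting by $(a(\zeta^{-1}\indett)\cdot\varphi)(0)$, which by Lemma~\ref{automorphismlemma} (applied with $\zeta^{-1}$, noting $(\zeta^{-1})^r=1$) equals $(a\cdot\varphi)(0)$. Hence $\theta$ lifts canonically to a semilinear-compatible automorphism of $\ind\varphi$ sending $\gen\varphi \mapsto \gen\varphi$ and intertwining the $\ghat$-action via $\theta$, which is precisely the stated formula; it is an automorphism of the underlying vector space and of the $\h$-module structure (since $\h \subset \g$ is fixed pointwise by $\theta$, as $\theta$ acts only on the $\laurent$ factor).

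Next I would check $\irredcycle$ descends to $\irred\varphi$. The unique maximal submodule $K \subset \ind\varphi$ (whose quotient is $\irred\varphi$) is preserved by $\irredcycle$: since $\irredcycle$ is a vector-space automorphism intertwining the action through the Lie algebra automorphism $\theta$, it carries submodules to submodules, hence carries the unique maximal proper submodule to itself. Therefore $\irredcycle$ induces an automorphism of $\irred\varphi$, still denoted $\irredcycle$, satisfying the displayed identity. For the order: $\irredcycle^r$ is the map induced by $\theta^r$, and $\theta^r$ is the substitution $\indett \mapsto \zeta^{-r}\indett = \indett$, i.e.\ the identity; so $\irredcycle^r = \Id$ and $\order\irredcycle$ divides $r$. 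To see the order is exactly $r$, I would exhibit a weight vector on which $\irredcycle$ acts by a primitive $r$th root of unity — e.g.\ $\slf \otimes \indett \cdot \gen\varphi$, on which $\irredcycle$ acts by $\zeta^{-1}$ (provided this vector is nonzero in $\irred\varphi$, which follows because $\deg\varphi = r$ forces the support of $\varphi$ to generate $r\Z$, so $\varphi$ is not supported in $r'\Z$ for any proper multiple $r'$ of $r$, making the relevant action nontrivial); alternatively argue that if $\irredcycle^s = \Id$ for $s \mid r$, $s < r$, then $\irred\varphi$ would be supported in weights with $\imagroot$-grading in $(r/s)\Z$ shifted appropriately, contradicting the structure of $\irred\varphi$ established via Lemma~\ref{hmodlemma}.

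Finally, the eigenspace decomposition is immediate: $\irredcycle$ is an automorphism of order $r$ of a vector space over the algebraically closed field $\K$ of characteristic zero, so it is diagonalizable with eigenvalues among the $r$th roots of unity $\{\zeta^i : i \in \Zmod r\}$, giving $\irred\varphi = \bigoplus_{i \in \Zmod r}\eigenspace{\irred\varphi}{\irredcycle}{\zeta^i}$. \textbf{The main obstacle} I anticipate is the well-definedness step — confirming that the $\theta$-twist fixes the defining character of $\K\gen\varphi$, which is exactly where Lemma~\ref{automorphismlemma} is needed and why $\zeta^r = 1$ (equivalently $r = \deg\varphi$ divides the period of the support) is essential; everything downstream is formal nonsense about twisting modules by Lie algebra automorphisms plus diagonalizability of finite-order operators.
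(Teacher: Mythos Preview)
Your proposal is correct and follows essentially the same route as the paper: lift the substitution $\indett \mapsto \zeta^{-1}\indett$ to a Lie-algebra (hence $\UEA{\ghat}$-) automorphism, use Lemma~\ref{automorphismlemma} to check it preserves the defining relations of $\ind\varphi$, show it preserves the maximal submodule and so descends to $\irred\varphi$, and then read off the eigenspace decomposition. The only cosmetic differences are that the paper phrases well-definedness via the left ideal in $\UEA{\ghat}$ rather than the twisted-module language, and obtains the eigenspace decomposition from explicit PBW monomials rather than abstract diagonalizability; both the paper and you are a bit casual about the order being \emph{exactly} $r$ on the quotient (your appeal to $\slf\otimes\indett\cdot\gen\varphi \neq 0$ in $\irred\varphi$ and to an $\imagroot$-grading on $\irred\varphi$ via Lemma~\ref{hmodlemma} is not quite right, since $\irred\varphi$ is only a $\ghat$-module), but this does not affect the eigenspace decomposition, which holds with possibly empty summands regardless.
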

\begin{proof}
The rule $\indett \mapsto \zeta^{-1} \indett$ extends to an automorphism of $\laurentabbrev$, which defines an automorphism of the loop algebra $\ghat$. 
This automorphism in turn defines an automorphism $\irredcycle$ of the universal enveloping algebra $\UEA \ghat$.
The universal module $\ind \varphi$ may be realised as the quotient of $\UEA \ghat$ by the left ideal $I$ generated by $\ghat_{+}$ and by the elements of the set
$$
\set{ \slh \otimes a - (a \cdot \varphi) (0)| a \in \laurentabbrev}.
$$
The map $\irredcycle$ preserves this set by Lemma \ref{automorphismlemma}:
\begin{eqnarray*}
\irredcycle ( \slh \otimes a - (a \cdot \varphi)(0)) &=& \slh \otimes a(\zeta^{-1} \indett) - (a \cdot \varphi) (0) \\
	&=& \slh \otimes a(\zeta^{-1} \indett) - (a(\zeta^{-1} \indett) \cdot \varphi) (0).
\end{eqnarray*}
Clearly $\irredcycle$ preserves $\ghat_{+}$, and so $\irredcycle (I) = I$.
Therefore $\irredcycle$ is well-defined on the quotient $\ind \varphi$ of $\UEA{\ghat}$.
The monomial
$$
\slf \otimes \indett^{n_1} \cdots \slf \otimes \indett^{n_k} \cdot \gen \varphi \quad \in \ind \varphi
$$
is an eigenvector of eigenvalue $\zeta^{-m}$ where $m = \sum_{i=1}^k n_i$,
and so 
the Poincar\'e-Birkhoff-Witt Theorem guarantees a decomposition
\begin{equation}\label{indeigenspaces}
\ind \varphi = \bigoplus_{i \in \Zmod r} \eigenspace {\ind \varphi} \irredcycle {\zeta^i}
\end{equation}
of $\ind \varphi$ into eigenspaces for $\irredcycle$.
It is easy to check that $\irredcycle$ commutes with the action of $\slh \otimes \indett^0$, from which it follows that
if $U$ is a proper submodule, then so is $\irredcycle (U)$. 
Hence $\irredcycle$ preserves the maximal submodule of $\ind \varphi$, and so is defined on the quotient $\irred \varphi$. 
This induced map is of order $r$, by construction, and decomposes $\irred \varphi$ in the manner claimed by (\ref{indeigenspaces}).
\end{proof}
\end{subsection}

\begin{subsection}{Decomposition of a loop module}
For any non-zero $\varphi \in \exppoly$, define an automorphism \newnot{notn:loopend}$\loopend_\varphi$ of the vector space $\loopmod {\irred \varphi}$ via
$$ \loopend_\varphi ( u \otimes a ) = \irredcycle_\varphi (u) \otimes a(\rou{r} \indett), \qquad u \in \irred \varphi, \quad a \in \laurentabbrev,$$
where $r = \deg \varphi$.
\begin{theorem}\label{loopendtheorem}
Suppose that $\varphi \in \exppoly$ is non-zero. Let $r = \deg \varphi$,
$\zeta = \rou r$ and $\loopend = \loopend_\varphi$.
Then:
\begin{enumerate}
\item\label{automorphismpart} $\loopend$ is automorphism of the $\affineg$-module $\loopmod {\irred \varphi}$ of order $r$;
\item\label{decompositionpart} $\loopend$ decomposes $\loopmod {\irred \varphi}$ as a direct sum of eigenspaces
$$ \loopmod {\irred \varphi} = \bigoplus_{i \in \Zmod r} \eigenspace {\loopmod {\irred \varphi}} \loopend {\zeta^i},$$
where
$$
\eigenspace {\loopmod {\irred \varphi}} \loopend {\zeta^i} = \bigoplus_{m \in \Z} \eigenspace {\irred \varphi} {\irredcycle_\varphi} {\zeta^{i-m}} \otimes \indett^m, \qquad i \in \Zmod r;
$$
\item\label{realizationpart} 
For any $i \in \Zmod r$, the $\affineg$-modules $\eigenspace {\loopmod {\irred \varphi}} \loopend {\zeta^i}$ and $\affineirred \varphi \otimes \onedim i$ are isomorphic. 
\end{enumerate}
\end{theorem}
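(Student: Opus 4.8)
The plan is to verify (i) and (ii) by direct computation and to obtain (iii) by exhibiting $E_{i}:=\eigenspace{\loopmod{\irred\varphi}}{\loopend}{\zeta^{i}}$ as an irreducible quotient of an induced module isomorphic to $\Ind{\heis+\ghat_{+}}{\affineg}{\hmod\varphi}\otimes\onedim i$. For (i), the one substantive ingredient is the relation $\irredcycle_\varphi(x\otimes a\cdot w)=x\otimes a(\zeta^{-1}\indett)\cdot\irredcycle_\varphi(w)$ of Proposition \ref{endomorphismpropn}; specialised to $a=\indett^{k}$ it gives $\irredcycle_\varphi(x\otimes\indett^{k}\cdot w)=\zeta^{-k}\,x\otimes\indett^{k}\cdot\irredcycle_\varphi(w)$, whence
$$\loopend\big(x\otimes\indett^{k}\cdot(w\otimes\indett^{n})\big)=\zeta^{n}\,\big(x\otimes\indett^{k}\cdot\irredcycle_\varphi(w)\big)\otimes\indett^{k+n}=x\otimes\indett^{k}\cdot\loopend(w\otimes\indett^{n}),$$
and the corresponding identity for $\gd$ is immediate; since $\loopend$ is visibly bijective ($\irredcycle_\varphi$ tensored with the algebra automorphism $a\mapsto a(\zeta\indett)$ of $\laurentabbrev$), it is a module automorphism. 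It has order $r$: $\loopend^{r}=\Id$ because $\irredcycle_\varphi^{r}=\Id$ and $\zeta^{r}=1$, while $\loopend^{d}=\Id$ restricts on $\irred\varphi\otimes\indett^{0}$ to $\irredcycle_\varphi^{d}=\Id$, forcing $r\mid d$. For (ii), I would substitute $\irred\varphi=\bigoplus_{j\in\Zmod r}\eigenspace{\irred\varphi}{\irredcycle_\varphi}{\zeta^{j}}$ into $\loopmod{\irred\varphi}=\bigoplus_{m\in\Z}\irred\varphi\otimes\indett^{m}$ and note that $\irredcycle_\varphi(w)=\zeta^{j}w$ implies $\loopend(w\otimes\indett^{m})=\zeta^{j+m}(w\otimes\indett^{m})$; as $\loopend$ is semisimple of order $r$, gathering the summands with $j+m\equiv i$ yields the stated formula for $E_{i}$.

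\textbf{Part (iii): the generating subspace.} Fix an integer $i$ and put $W_{i}=\K\gen\varphi\otimes\indett^{i}\sublaurent r$; by (ii), $W_{i}\subseteq E_{i}$. I would record that $\ghat_{+}$ annihilates $W_{i}$ (since $\ghat_{+}\cdot\gen\varphi=0$), that $W_{i}$ is stable under $\heis$ (since the support of $\varphi$ lies in $r\Z$), and that $\indett^{rk}\otimes\onedimvector i\mapsto\gen\varphi\otimes\indett^{i+rk}$ is an isomorphism of $\heis+\ghat_{+}$-modules from $\hmod\varphi\otimes\onedim i$ (with $\ghat_{+}$ acting trivially) onto $W_{i}$ — a line-by-line comparison of the actions of $\slh\otimes\indett^{n}$ and $\gd$, using Lemma \ref{hmodlemma}\ref{hmodlemma1}. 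Next I would show $E_{i}=\UEA{\affineg}\cdot W_{i}$: for $w\otimes\indett^{m}\in E_{i}$, $w$ is a $\zeta^{\,i-m}$-eigenvector of $\irredcycle_\varphi$, and using $\irred\varphi=\UEA{\ghat_{-}}\cdot\gen\varphi$ together with the eigenvalue bookkeeping of Proposition \ref{endomorphismpropn} one may write $w$ as a sum of monomials $X\cdot\gen\varphi$ with $X=\prod_{s}(\slf\otimes\indett^{a_{s}})$ and $\sum_{s}a_{s}\equiv m-i\pmod r$; then $X\cdot(\gen\varphi\otimes\indett^{\,m-\sum_{s}a_{s}})=(X\cdot\gen\varphi)\otimes\indett^{m}$ with $\gen\varphi\otimes\indett^{\,m-\sum_{s}a_{s}}\in W_{i}$. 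Composing the isomorphism $\hmod\varphi\otimes\onedim i\xrightarrow{\ \sim\ }W_{i}$ with the inclusion $W_{i}\hookrightarrow E_{i}$ and invoking the universal property of induction then produces a surjection $\pi:\Ind{\heis+\ghat_{+}}{\affineg}{(\hmod\varphi\otimes\onedim i)}\twoheadrightarrow E_{i}$.

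\textbf{Part (iii): irreducibility and conclusion.} It remains to prove $E_{i}$ irreducible, after which the conclusion follows: by the tensor identity $\Ind{\heis+\ghat_{+}}{\affineg}{(\hmod\varphi\otimes\onedim i)}\cong\Ind{\heis+\ghat_{+}}{\affineg}{\hmod\varphi}\otimes\onedim i$, and since $-\otimes\onedim i$ is an autoequivalence, this module has a unique irreducible quotient, namely $\affineirred\varphi\otimes\onedim i$, so the irreducible quotient $E_{i}$ of $\pi$ is $\affineirred\varphi\otimes\onedim i$. For irreducibility I would first note that, since $\irred\varphi=\UEA{\ghat_{-}}\cdot\gen\varphi$ and $\ghat_{-}$ lowers the $\slh$-weight, the $\slh$-weights of $\irred\varphi$ are bounded above by $\varphi(0)$, the top weight space being the one-dimensional $\K\gen\varphi$; a $\ghat_{+}$-invariant weight vector $v$ of $\slh$-weight $\mu$ then generates, via $\UEA{\ghat}=\UEA{\ghat_{-}}\UEA{\hhat}\UEA{\ghat_{+}}$ and because $\ghat_{+}$ kills $v$, the space $\UEA{\ghat_{-}}\UEA{\hhat}\cdot v$, whose $\slh$-weights are all $\leqslant\mu$; by irreducibility this equals $\irred\varphi\ni\gen\varphi$, so $\mu\geqslant\varphi(0)$ and therefore $v\in\K\gen\varphi$. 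Hence the $\ghat_{+}$-invariants of $\loopmod{\irred\varphi}$ are $\K\gen\varphi\otimes\laurent$, and those of $E_{i}$ are exactly $W_{i}$. Now if $U\subseteq E_{i}$ is a non-zero submodule, then, $E_{i}$ being a weight module (a direct summand of the weight module $\loopmod{\irred\varphi}$), $U$ contains a weight vector of maximal $\slh$-weight; such a vector is $\ghat_{+}$-invariant, so it lies in $W_{i}$, and thus $U\cap W_{i}$ is a non-zero $\heis$-submodule of $W_{i}$. Since $W_{i}\cong\hmod\varphi\otimes\onedim i$ is irreducible over $\heis$ by Lemma \ref{hmodlemma}\ref{hmodlemma2} (an irreducible module twisted by a one-dimensional one), $W_{i}\subseteq U$, and hence $U\supseteq\UEA{\affineg}\cdot W_{i}=E_{i}$.

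\textbf{Main obstacle.} The step I expect to be the crux is pinning down the $\ghat_{+}$-invariants of $\irred\varphi$ and the resulting irreducibility of $E_{i}$: the loop module $\loopmod{\irred\varphi}$ is itself reducible and carries no compatible $\Z$-grading by powers of $\indett$, so one must juggle the $\gd$-grading furnished by the loop construction, the bounded-above $\slh$-weight grading, and the automorphism $\irredcycle_\varphi$ in concert; by contrast, the verifications in (i) and (ii), the identification of $W_{i}$, and the construction of $\pi$ are routine bookkeeping.
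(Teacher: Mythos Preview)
Your proof is correct and follows essentially the same route as the paper: parts (i) and (ii) by direct computation, then for (iii) identifying the top-$\slh$-weight strip $W_i$ (the paper's $U'$) with $\hmod\varphi\otimes\onedim i$ as $\heis$-modules and proving $E_i$ irreducible by showing that any nonzero submodule contains a maximal vector, hence meets $W_i$, hence contains $W_i$ by Lemma~\ref{hmodlemma}\ref{hmodlemma2}. The only cosmetic differences are that the paper packages the identification of maximal vectors via the $\ghat$-module evaluation map $E_i\to\irred\varphi$, $u\otimes\indett^n\mapsto u$, rather than arguing directly on the $\ghat_+$-invariants of $\irred\varphi$, and that you are more explicit than the paper in checking that $E_i=\UEA{\affineg}\cdot W_i$ before invoking the universal property of induction.
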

\begin{proof}
For any $x \in \g$, $u \in \irred \varphi$, $a \in \laurentabbrev$ and $m \in \Z$, 
\begin{eqnarray*}
\loopend ( {x \otimes \indett^m} \cdot {u \otimes a}) &=& \loopend ( ({x \otimes \indett^m} \cdot u) \otimes \indett^m a)\\
	&=& \zeta^m \irredcycle ({x \otimes \indett^m} \cdot u) \otimes \indett^m a (\zeta \indett) \\
	&=& \zeta^m  \zeta^{-m} ({x \otimes \indett^m} \cdot \irredcycle (u)) \otimes \indett^m a (\zeta \indett) \\
	&=& {x \otimes \indett^m} \cdot (\irredcycle(u) \otimes a(\zeta \indett)) \\
	&=& {x \otimes \indett^m} \cdot \loopend (u \otimes a),
\end{eqnarray*}
where $\irredcycle = \irredcycle_\varphi$. 
The map $\loopend$ is of order $r$ by definition, and so part \ref{automorphismpart} is proven.
Part \ref{decompositionpart} follows immediately from Proposition \ref{endomorphismpropn}.
Let $i \in \Zmod r$, and write 
$$ U = \eigenspace {\loopmod {\irred \varphi}} \loopend {\zeta^i}, \qquad U' =  \bigoplus_{m \in \Z} \weightspace U { \frac{\varphi(0)}{2} \groot + m \imagroot}.$$
Then $U'$ and $\hmod \varphi \otimes \onedim i$ are isomorphic as $\heis$-modules, via
$$ \gen{\varphi} \otimes \indett^{m r + i} \mapsto \indett^{m r} \otimes \onedimvector i , \qquad m \in \Z.$$
This map extends uniquely to an epimorphism of $\affineg$-modules
$U \to \affineirred \varphi \otimes \onedim i$.
Therefore it is sufficient to prove that $U$ is an irreducible $\affineg$-module.
Suppose that $W$ is a submodule of $U$.
Then $W$ contains a non-zero maximal weight vector $v \otimes \indett^n$.
The $\ghat$-module epimorphism $U \to \irred \varphi$ that is induced by $\indett \mapsto 1$ maps this element to 
 a non-zero maximal vector of $\irred \varphi$. 
Therefore $v = \lambda \gen{\varphi}$ is a non-zero scalar multiple of the highest-weight vector.
Hence $W$ has non-trivial intersection with the generating subspace $U'$ of $U$.
The $\heis$-module $U'$ is irreducible, so
$U' \subset W$, and thus $W=U$. 
Therefore $U$ is irreducible.
\end{proof}
\end{subsection}

\begin{subsection}{Characters and semi-invariants}
If $\varphi \in \exppoly$ is non-zero and $r = \deg \varphi$, then
Theorem \ref{loopendtheorem} describes the modules $\affineirred \varphi$ in terms of the semi-invariants of $\irred \varphi$ with respect to the action of the cyclic group $\Zmod r$ defined by $\irredcycle$.
In particular, we have the following description of the character of $\affineirred \varphi$.
\begin{corollary}\label{characterinvariant}
Suppose that $\varphi \in \exppoly$ is non-zero and let $\deg \varphi = r$.  Then
$$ \exppolychar \varphi = \indet{Z}^{\frac{\varphi (0)}{2} \groot} \cdot \sum_{k \geqslant 0} \sum_{n \in \Z} \dim \eigenspace {\weightspace{\irred \varphi} {\frac{\varphi (0)}{2} \groot - k \groot}} \irredcycle {\zeta^n} \hspace{0.15em} \indet{Z}^{- k \groot + n \imagroot},$$
where $\zeta = \rou{r}$.
\end{corollary}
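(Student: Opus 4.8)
The statement is very close to a reformulation of Theorem~\ref{loopendtheorem}, so the proposed proof is short and consists mostly of bookkeeping. The plan is to specialise part~\ref{realizationpart} of that theorem to $i=0$: since $\onedim 0$ is the trivial one-dimensional $\affineg$-module, $\affineirred\varphi \otimes \onedim 0 \cong \affineirred\varphi$, and combining this with part~\ref{decompositionpart} gives an isomorphism of $\affineg$-modules
$$ \affineirred\varphi \ \cong \ \eigenspace{\loopmod{\irred\varphi}}{\loopend_\varphi}{\zeta^{0}} \ = \ \bigoplus_{m \in \Z} \eigenspace{\irred\varphi}{\irredcycle_\varphi}{\zeta^{-m}} \otimes \indett^m, \qquad \zeta = \rou{r}. $$
Everything after this is a matter of reading off weights. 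In $\loopmod{\irred\varphi} = \irred\varphi \otimes \laurent$ the subalgebra $\h = \h \otimes \indett^0 \subset \ghat$ acts through the first tensor factor while $\gd$ multiplies $\indett^m$ by $m$, so a vector $w \otimes \indett^m$ with $w$ of $\h$-weight $\nu$ has $\affinecsa$-weight $\nu + m\imagroot$.

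Next I would refine the decomposition above according to the $\h$-weights of $\irred\varphi$. The module $\irred\varphi$ is the irreducible highest-weight $\ghat$-module generated by $\gen\varphi$; from $\slh \cdot \gen\varphi = (\indett^0 \cdot \varphi)(0)\,\gen\varphi = \varphi(0)\,\gen\varphi$ and $\groot(\slh)=2$ its highest $\h$-weight is $\frac{\varphi(0)}{2}\groot$, and since $\ghat_-$ is spanned by the vectors $\slf \otimes \indett^j$, each of $\h$-weight $-\groot$, one has $\irred\varphi = \bigoplus_{k \geqslant 0} \weightspace{\irred\varphi}{\frac{\varphi(0)}{2}\groot - k\groot}$. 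By Proposition~\ref{endomorphismpropn} the automorphism $\irredcycle_\varphi$ commutes with the action of $\slh = \slh \otimes \indett^0$, hence preserves each of these weight spaces, so
$$ \eigenspace{\irred\varphi}{\irredcycle_\varphi}{\zeta^{-m}} \ = \ \bigoplus_{k \geqslant 0} \eigenspace{\weightspace{\irred\varphi}{\frac{\varphi(0)}{2}\groot - k\groot}}{\irredcycle_\varphi}{\zeta^{-m}}. $$
Feeding this into the displayed isomorphism and comparing $\affinecsa$-weights, I would conclude that for $k \geqslant 0$ and $m \in \Z$ the weight space of $\affineirred\varphi$ of weight $\frac{\varphi(0)}{2}\groot - k\groot + m\imagroot$ is carried isomorphically onto $\eigenspace{\weightspace{\irred\varphi}{\frac{\varphi(0)}{2}\groot - k\groot}}{\irredcycle_\varphi}{\zeta^{-m}} \otimes \indett^m$, and that all remaining weight spaces of $\affineirred\varphi$ vanish; these spaces are finite dimensional since $\affineirred\varphi$ is an object of $\categoryo$ with finite-dimensional weight spaces under the standing hypothesis $\varphi \in \exppoly$ (equivalently, since the $\h$-weight spaces of $\irred\varphi$ are finite dimensional).

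Finally I would assemble the character by multiplying each $\indet{Z}^{\chi}$ by the corresponding multiplicity and summing over $k \geqslant 0$ and $m \in \Z$; reindexing the inner sum then yields the asserted expression for $\exppolychar\varphi$. Essentially all of the content is carried by Theorem~\ref{loopendtheorem}, so the main obstacle, such as it is, is purely organisational: keeping the three gradings mutually consistent through the two eigenspace decompositions --- the $\irredcycle_\varphi$-eigenvalue on $\irred\varphi$, the exponent of the loop variable $\indett$, and the $\h$-weight of $\irred\varphi$ (equivalently the index $k$) --- and carrying out the final reindexing in accordance with the sign conventions fixed in the definition of $\loopend_\varphi$ and in Proposition~\ref{endomorphismpropn}.
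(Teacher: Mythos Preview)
Your approach is correct and is precisely what the paper has in mind: the corollary is stated without proof, as an immediate consequence of Theorem~\ref{loopendtheorem}, and your argument via parts~\ref{decompositionpart} and~\ref{realizationpart} with $i=0$ is the intended one.

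One small caveat concerning your final step. Carrying out the bookkeeping exactly as you describe yields
\[
\exppolychar\varphi \;=\; \indet{Z}^{\frac{\varphi(0)}{2}\groot}\sum_{k\geqslant 0}\sum_{m\in\Z}\dim\eigenspace{\weightspace{\irred\varphi}{\frac{\varphi(0)}{2}\groot-k\groot}}{\irredcycle}{\zeta^{-m}}\,\indet{Z}^{-k\groot+m\imagroot},
\]
so a straight reindexing $n=-m$ produces $\zeta^{n}$ paired with $\indet{Z}^{-n\imagroot}$, not $\indet{Z}^{+n\imagroot}$ as in the stated corollary. This discrepancy is harmless for everything that follows: once $\irredcycle_\varphi$ is identified with the cyclic permutation $\cycle r$ (Proposition~\ref{recognitionofcyclicend}), the eigenspace dimensions are symmetric under $n\mapsto -n$, and correspondingly the Ramanujan sums in Theorem~\ref{semiinvarianttheorem} satisfy $\ramanujan d n=\ramanujan d{-n}$. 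So either read the corollary with this symmetry in mind, or regard the sign as a slip in the statement; your derivation is sound either way.
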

\end{subsection}
\end{section}

\begin{section}{Semi-invariants of Actions of Finite Cyclic Groups}\label{cyclicsection}
A \newterm{$\zplus$-graded vector-space} is a vector space $V$ over $\K$ with a decomposition $V = \bigoplus_{k \geqslant 0} {\vscomponent V k}$ of $V$ into finite-dimensional subspaces  indexed by $\zplus$.
If $V$ is a $\zplus$-graded vector space and $r$ is a positive integer, then the tensor power
$$ V^r := V \otimes \cdots \otimes V \qquad \text{($r$ times)}$$
is also a $\zplus$-graded vector space, with the decomposition 
$$ \textstyle V^r = \bigoplus_{k \geqslant 0} \vscomponent {V^r} k , \qquad \vscomponent {V^r} k = \bigoplus_{k_1 + \cdots + k_r = k} {\vscomponent V {k_1}} \otimes \cdots \otimes {\vscomponent V {k_r}}.$$
The finite cyclic group $\Zmod r$ acts on $V^r$ by cycling homogeneous tensors;
the generator $1 \in \Zmod r$ acts via the vector space automorphism
$$ \cycle r : v_1 \otimes \cdots \otimes v_r \mapsto v_{r} \otimes v_1 \cdots \otimes v_{r-1}, \qquad v_i \in V,$$
and this action preserves the grading, so that $\cycle r (\vscomponent {V^r} k) = \vscomponent {V^r} k$, for any $k \geqslant 0$.
For any $U \subset V^r$, let
$$ U_n = \eigenspace U {\cycle r} {\rou{r}^n}, \qquad n \in \Z.$$
The automorphism $\cycle r$ decomposes $V^r$ as a direct sum of $\zplus$-graded vector spaces 
$$ \textstyle V^r = \bigoplus_{n \in \Zmod r} {V^r_n}, \qquad V^r_n = \bigoplus_{k \geqslant 0} {\vscomponent {V^r_n} k},\qquad {\vscomponent {V^r_n} k} = ({\vscomponent {V^r} k })_n.$$
Associated to any $\zplus$-graded vector space $U$ is the Poincar\'e series \newnot{notn:vscharacter}
$$ \vscharacter U (\indet{X}) = \sum_{k \geqslant 0}{\dim U_k \hspace{0.1em} \indet{X}^k} \quad \in \zplus[[\indet{X}]].$$

\begin{theoremnonum}
For any $\zplus$-graded vector space $V$, $r > 0$ and $n \in \Z$,
$$\vscharacter{ V^r_n} (\indet X) = \frac{1}{r} \sum_{\divides d r } { \ramanujan d n \left ( \vscharacter{V}(\indet{X}^d) \right )^{\frac r d}}.$$
\end{theoremnonum}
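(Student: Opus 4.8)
The plan is to apply the eigenspace-projection formula for the cyclic group $\Zmod r$ acting on $V^r$ via $\cycle r$, and then to evaluate the resulting graded traces of the powers $\cycle r^j$ by a fixed-point count on homogeneous tensors. Throughout, fix $\zeta=\rou r$. Since $\cycle r$ has order $r$, the operator $\frac1r\sum_{j=0}^{r-1}\zeta^{-nj}\cycle r^j$ is the projection of $V^r$ onto $V^r_n=\eigenspace{V^r}{\cycle r}{\zeta^n}$, and it preserves the grading; so, working coefficient-wise (each $\vscomponent{V^r}{k}$ is finite-dimensional) and taking graded traces, I would first record
$$
\vscharacter{V^r_n}(\indet X)=\frac1r\sum_{j=0}^{r-1}\zeta^{-nj}\,T_j(\indet X),
\qquad
T_j(\indet X):=\sum_{k\geqslant 0}\operatorname{tr}\bigl(\cycle r^j\mid\vscomponent{V^r}{k}\bigr)\,\indet X^{k}.
$$
It then remains to compute $T_j$ and to reindex the sum.

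To compute $T_j$, I would fix a basis of $V$ consisting of homogeneous vectors; the induced basis of $V^r$ consists of homogeneous tensors $e_{i_1}\otimes\cdots\otimes e_{i_r}$, which $\cycle r^j$ merely permutes. Hence $T_j(\indet X)$ is the sum of $\indet X^{\deg}$ over the homogeneous basis tensors fixed by $\cycle r^j$, namely those whose multi-index $(i_1,\dots,i_r)$ is constant along each cycle of the permutation induced by $\cycle r^j$ on the tensor slots $\{1,\dots,r\}$. Choosing, independently for each cycle $c$ of that permutation, a homogeneous basis vector of $V$ of some degree $e_c$ contributes $\sum_c |c|\,e_c$ to the total degree, so the sum factorises as $\prod_c\vscharacter V(\indet X^{|c|})$. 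Since $\cycle r$ is an $r$-cycle on the slots, $\cycle r^j$ is a product of $\gcd(r,j)$ cycles, each of length $r/\gcd(r,j)$; therefore $T_j(\indet X)=\vscharacter V\!\bigl(\indet X^{\,r/\gcd(r,j)}\bigr)^{\gcd(r,j)}$.

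Finally I would reindex the sum over $j$ by the divisors of $r$. For a divisor $d$ of $r$, the indices $j\in\{0,\dots,r-1\}$ with $\gcd(r,j)=r/d$ are exactly $j=(r/d)\,j'$ with $j'$ running over the residues mod $d$ coprime to $d$, and $\omega:=\zeta^{\,r/d}$ is then a primitive $d$-th root of unity; since $\gcd(r,j)=r/d$ forces $r/\gcd(r,j)=d$, substituting gives
$$
\vscharacter{V^r_n}(\indet X)=\frac1r\sum_{\divides d r}\ \biggl(\,\sum_{\substack{j'\bmod d\\ \gcd(j',d)=1}}\omega^{-nj'}\biggr)\,\vscharacter V(\indet X^{d})^{\,r/d}.
$$
The inner sum does not depend on the choice of primitive $d$-th root of unity $\omega$ (replacing $\omega$ by $\omega^{k}$ with $\gcd(k,d)=1$ merely permutes the index set), so it equals the standard Ramanujan sum, which by the classical identity of H\"older is the quantity $\ramanujan d n$ of \eqref{ramanujandefn}; moreover $\ramanujan d n=\ramanujan d{-n}$, so the sign of $n$ is immaterial. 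This is precisely the asserted formula.

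The hard part will be the middle step: handling the fixed-tensor combinatorics correctly and, in particular, pinning down the cycle type of $\cycle r^j$ on the tensor slots, so that the graded trace collapses cleanly to the power $\vscharacter V(\indet X^{r/\gcd(r,j)})^{\gcd(r,j)}$. Once that is in place, the projection step is routine cyclic-group character theory, and the reindexing is just the bookkeeping that rewrites $\sum_j$ as a divisor sum with the Ramanujan coefficients.
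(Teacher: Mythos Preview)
Your argument is correct, but it proceeds by a genuinely different route from the paper. The paper never writes down the projection operator $\frac1r\sum_j\zeta^{-nj}\cycle r^j$ nor any graded traces; instead it works directly with the orbit decomposition of the basis index set $\params r k$ under the cycling map $\paramcycle r$, introduces the generating functions $\ordergf r d(\indet X)$ that count basis tuples of a given orbit size, and then performs M\"obius inversion on the identity $(\vscharacter V(\indet X))^r=\sum_{\divides d r}\ordergf d d(\indet X^{r/d})$ to isolate $\ordergf r r$. Your trace computation and the paper's orbit count are of course two sides of the same coin: your $T_j(\indet X)=\vscharacter V(\indet X^{r/\gcd(r,j)})^{\gcd(r,j)}$ is equivalent, after reindexing, to the paper's expression of $\vscharacter{V^r_n}$ as a divisor sum of the $\ordergf r{r/d}$.

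What each approach buys: yours is shorter and more conceptual, essentially a graded cycle-index (Molien-type) calculation; indeed the paper remarks, just after stating the theorem, that in the special case where $V$ is a symmetric algebra the result follows from Molien's Theorem together with the identity $\ramanujan r n=\sum_{\zeta\in\rous r}\zeta^n$, and your argument is precisely the extension of that remark to arbitrary $V$. The paper's approach, by contrast, is self-contained and purely combinatorial, never invoking traces or characters, and as a by-product yields the auxiliary identities for the orbit-counting functions $\ordergf r d$. Incidentally, the step you flagged as the hard part---the cycle type of $\cycle r^j$ on the tensor slots---is routine once one observes that $\cycle r$ is an $r$-cycle; the real content of your proof is the clean factorisation of the graded trace as $\prod_c\vscharacter V(\indet X^{|c|})$, which you handle correctly.
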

In this section, we describe an elementary proof of this statement.
If $U$ is the regular representation of $\Zmod r$ and $V = \SymAlg {U}$ is the symmetric algebra, then the statement follows easily from Molien's Theorem and the identity (\ref{ramanujanidentities}).

Let $\moebiusmu$\newnot{notn:moebiusmu} denote the M\"obius function, \ie the function
$$\moebiusmu : \N \to \set{-1, 0, 1}$$
such that $\moebiusmu(d) = (-1)^l$ if $d$ is the product of $l$ distinct primes, $l \geqslant 0$, and $\moebiusmu(d) = 0$ otherwise.
For any $r > 0$, the function $\moebiusmu$ satisfies the fundamental property
\begin{equation}\label{moebiusproperty}
\sum_{\divides d r}{ \moebiusmu(d)} = \kronecker r 1,
\end{equation}
where $\kroneckersymbol$ denotes the Kronecker function.
A summation $\sum_{\divides d r} {a_d}$ is to be understood as the sum of all the $a_d$ where $d$ is a positive divisor of $r$.
Let $\eulertotient: \N \to \N$\newnot{notn:eulertotient} denote Euler's totient function, so that 
$$
\eulertotient (d) = \# \set{ 0 < k \leqslant d | \gcd (k,d) = 1}, \qquad d > 0.
$$
For any positive integer $d$ and $n \in \Z$, the quantity $\ramanujan d n$ defined by \eqref{ramanujandefn} is called a \newterm{Ramanujan sum}, a \newterm{von Sterneck function}, or a \newterm{modified Euler number}.
These quantities have extensive applications in number theory (see, for example, \cite{Ramanujan}, \cite{NicolVanDiver}), although we require only the most basic properties, such as those described in  \cite{Erdelyi}.
In particular, we note the identities
\begin{equation}\label{ramanujanidentities}
\ramanujan r n = \sum_{\zeta \in \rous r} \zeta^n = \sum_{\divides d {\gcd (r,n)}} d \hspace{0.1em} \moebiusmu(\frac r d) 
\end{equation}
where $n \in \Z$ and $r > 0$.

Fix a $\zplus$-graded vector space $V$, 
let
$$ \basis = \set{ (k,s) \in \zplus^2 | 1 \leqslant s \leqslant \dim \vscomponent V k} $$
and for each $k \geqslant 0$, choose a basis $\set{ v^k_{s} }_{1 \leqslant s \leqslant \dim \vscomponent V k}$ for $\vscomponent V k$.
For any $r > 0$ and $k \geqslant 0$, let
$$ \params r k  = \set { ( (k_1, s_1), \dots, (k_r, s_r)) \in {\basis} ^r | {\textstyle \sum_{i=1}^r k_i = k }}.$$
The elements of $\params r k$ parameterise a graded basis of $\vscomponent {V^r} k$:
$$ \params r k \ni \quad ( (k_1, s_1), \dots, (k_r, s_r)) = I \quad \leftrightarrow \quad v_I = v^{k_1}_{s_1} \otimes \cdots \otimes v^{k_r}_{s_r} \quad \in \vscomponent V k.$$
Define an automorphism $\paramcycle r$ of the sets $\params r k$ via the rule
$$ \paramcycle r : ( (k_1, s_1), \dots, (k_r, s_r)) \mapsto( (k_r, s_r), (k_1, s_1), \dots, (k_{r-1}, s_{r-1}) ).$$
The automorphisms $\cycle r $ and $\paramcycle r$ are compatible in the sense that
$$ \cycle r (v_I) = v_{\paramcycle r (I)}, \qquad I \in \params r k, \quad k \geqslant 0.$$
For $I \in \params r k$, write $\order I = d$ for the minimal positive integer such that $({\paramcycle r})^d (I) = I$.
For any positive divisor $d$ of $r$, let
$$ \ordercount r d k = \# \set{I \in \params r k | \order I = d}, \qquad k \geqslant 0,$$
and write $\ordergf r d (\indet{X}) = \sum_{k \geqslant 0} \ordercount r d k \hspace{0.1em} \indet{X}^k$ for the generating function.
It is apparent that
\begin{equation}\label{orderequation1}
\vscharacter {V^r} (\indet{X}) = (\vscharacter {V} (\indet{X}))^r = \sum_{\divides d r} \ordergf r d (\indet{X})
\end{equation}
\begin{lemma}\label{divisorlemma}
Suppose that $l,r$ are positive integers and that $\divides l r$. Then
$$ \set{ d | d > 0, \ \ \divides {r/l} d \ \ \text{and} \ \ \divides d r } 
= \set{ {r}/{d'} | d' > 0, \ \ \divides {d'} l }.$$
\end{lemma}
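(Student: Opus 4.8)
The statement is an elementary assertion about the divisor lattice of $r$, and the plan is to prove it by transporting the left-hand side through the classical involution $d \mapsto r/d$ on the set of positive divisors of $r$.

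First I would record that, since $l \mid r$, the quotient $m := r/l$ is a positive integer, so the left-hand set $\set{ d > 0 \mid (r/l) \mid d \ \text{and}\ d \mid r}$ is well-defined and is exactly the set of positive divisors of $r$ that are divisible by $m$. Given such a $d$, I would write $d = r/d'$ with $d' := r/d$, again a positive divisor of $r$, and then translate the membership condition: since $d/m = (r/d')/(r/l) = l/d'$, the condition $m \mid d$ is equivalent to $d' \mid l$. All of these manipulations are reversible, so one obtains an honest bijection rather than a one-sided inclusion.

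The last point to dispatch is the range of $d'$: a priori $d'$ ranges over the positive divisors of $r$, but once $d' \mid l$ this is automatic, because $l \mid r$. Hence the pair of conditions ``$d' \mid r$ and $d' \mid l$'' collapses to ``$d' \mid l$'', and the image of the left-hand set under $d \mapsto r/d$ is precisely $\set{ r/d' \mid d' > 0, \ d' \mid l}$, which is the right-hand set. I do not anticipate any genuine obstacle; the only thing requiring care is to phrase each divisibility step as an equivalence, so that the conclusion is an equality of sets and not merely one inclusion.
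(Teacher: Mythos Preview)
Your argument is correct and is essentially the same as the paper's: the paper also sets $d' = r/d$ (introduced via an auxiliary $s$ with $d = (r/l)s$ and $d' = l/s$, which is the same quantity) and checks the two inclusions by exactly the divisibility translations you describe. Your phrasing in terms of the involution $d \mapsto r/d$ on divisors of $r$ is a slightly cleaner way to package the same computation.
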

\begin{proof}
If $d > 0$ and $\divides {\frac r l} d$, then there exists some positive integer $s$ such that
$$ d = \frac{r}{l} s = \frac r {(l/s)};$$
if in addition $\divides d r$, then $d' := l/s$ is a positive integer, and so $d = r / {d'}$ with $\divides {d'} l$.
Conversely, if  $\divides {d'} l$, then $\divides {r/l} {r/{d'}}$, and it is obvious that $\divides {r/{d'}} r$. 
\end{proof}

\begin{propn}\label{ordercountingpropn1}
For any $\zplus$-graded vector space $V$ and any $r > 0$,
$$ \vscharacter{V^r_n} (\indet{X})= \frac{1}{r} \sum_{ \divides d \hspace{0.1em} {\gcd (r,n)} } d \ordergf r {\frac{r}{d}} (\indet{X}).$$
for all $n \in \Z$.
\end{propn}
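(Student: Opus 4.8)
The plan is to compute $\vscharacter{V^r_n}(\indet X)$ by decomposing the cycling action $\cycle r$ on $\vscomponent{V^r}k$ into orbits and reading off the eigenspace dimensions orbit by orbit. First I would recall that the generator $1 \in \Zmod r$ acts on the basis $\set{v_I}_{I \in \params r k}$ via the permutation $\paramcycle r$, and that $\paramcycle r$ partitions $\params r k$ into orbits whose sizes are exactly the divisors $d$ of $r$; there are $\ordercount r d k / d$ orbits of size $d$. On the span of a single orbit of size $d$ — call it $\Span\set{v_I, \cycle r v_I, \dots, \cycle r^{d-1} v_I}$ — the operator $\cycle r$ acts as a cyclic permutation of order $d$, so this $d$-dimensional space decomposes into one-dimensional eigenspaces, one for each eigenvalue $\rou d^{\,j}$, $0 \leqslant j < d$. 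Equivalently, the eigenvalues occurring (each with multiplicity one) are precisely the $d$-th roots of unity.

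The next step is to count, for fixed $n$, how many of these one-dimensional eigenspaces have eigenvalue $\rou r^{\,n}$. An orbit of size $d$ contributes an eigenvector of eigenvalue $\rou r^{\,n}$ if and only if $\rou r^{\,n}$ is a $d$-th root of unity, i.e. $(\rou r^{\,n})^d = 1$, i.e. $r \mid nd$, i.e. $(r/d) \mid n$ — and in that case it contributes exactly one such eigenvector. So, summing over orbit sizes,
$$
\dim \vscomponent{V^r_n}k = \sum_{\substack{\divides d r \\ \divides{(r/d)}{n}}} \frac{\ordercount r d k}{d}.
$$
Now I reindex the sum via $d \mapsto d'= r/d$: the condition "$\divides d r$ and $\divides{(r/d)}n$" becomes "$\divides{d'}r$ and $\divides{d'}n$", i.e. $\divides{d'}{\gcd(r,n)}$, and $\ordercount r d k/d = \ordercount r {r/d'} k \cdot d'/r$. (Lemma~\ref{divisorlemma}, applied with $l = \gcd(r,n)$, is exactly the bookkeeping that legitimises this substitution: it identifies the index set $\set{d : \divides{(r/\gcd(r,n))}d,\ \divides d r}$ with $\set{r/d' : \divides{d'}{\gcd(r,n)}}$, which is the same as allowing all $d$ with $(r/d)\mid n$ once one notes $(r/d)\mid n \iff (r/\gcd(r,n))\mid d$.) Hence
$$
\dim \vscomponent{V^r_n}k = \frac{1}{r} \sum_{\divides{d'}{\gcd(r,n)}} d'\, \ordercount r {r/d'} k.
$$

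Finally I multiply by $\indet X^k$ and sum over $k \geqslant 0$. The sum over $k$ passes through the finite sum over $d'$ (which does not depend on $k$), and by definition $\sum_{k \geqslant 0} \ordercount r {r/d'} k \indet X^k = \ordergf r {r/d'}(\indet X)$. Renaming $d'$ back to $d$ gives
$$
\vscharacter{V^r_n}(\indet X) = \frac{1}{r}\sum_{\divides d {\gcd(r,n)}} d\, \ordergf r {r/d}(\indet X),
$$
as claimed. The only genuinely delicate point is the eigenvalue count in the middle step — being careful that a size-$d$ orbit sees the eigenvalue $\rou r^{\,n}$ precisely when $(r/d)\mid n$, and then with multiplicity exactly one — together with the clean reindexing supplied by Lemma~\ref{divisorlemma}; everything else is the routine passage from dimensions to generating functions.
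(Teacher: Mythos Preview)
Your proof is correct and follows essentially the same approach as the paper: decompose $\vscomponent{V^r}k$ into $\paramcycle r$-orbits, observe that each orbit span carries the regular representation of the corresponding cyclic group, count which orbits contribute the eigenvalue $\rou r^{\,n}$, and reindex via $d \leftrightarrow r/d$ using Lemma~\ref{divisorlemma} with $l = \gcd(r,n)$. The only cosmetic difference is that the paper phrases the eigenvalue criterion in terms of the order of $\rou r^{\,n}$ (namely $r/\gcd(r,n)$) rather than your equivalent condition $(r/d)\mid n$.
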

\begin{proof}
Suppose that $k \geqslant 0$,
and write $\params r k = \bigsqcup_{O \in \mathcal{P}} O$ for the decomposition of $\params r k$ into a disjoint union of orbits for the action of $\Zmod r$ defined by $\paramcycle r$.
Then
$$ \vscomponent {V^r} k = \bigoplus_{O \in \mathcal{P}} U_O, \qquad U_O = \Span \set{ v_I | I \in O},$$
and moreover $\cycle r (U_O) = U_O$.
For any orbit $O \in \mathcal{P}$, the action of $\cycle r$ on $U_O$ defines the regular representation of $\Zmod d$, where $d = \# O$ is the size of the orbit; in particular, the eigenvalues of $\cycle r$ on $U_O$ are precisely the roots of unity $\zeta$ such that $\zeta^d = 1$, each with multiplicity 1.
Now ${\rou{r}^n}$ is of order $\frac{r}{\gcd(r,n)}$.
Therefore,
\begin{eqnarray*}
\dim \vscomponent {V^r_n} k &=& \# \set{ O \in \mathcal{P} | \divides{ \SFrac{r}{\gcd (r,n)}} {\# O}} \\
	&=& \# \set{ O \in \mathcal{P} | \# O = \SFrac{r}{d} \ \ \text{for some} \ \ \divides d {\gcd (r,n)} }
\end{eqnarray*}
where the last equality follows from Lemma \ref{divisorlemma} with $l = \gcd (r,n)$.
The number of orbits $O \in \mathcal{P}$ of size $r/d$ is precisely $d/r \cdot \ordercount r {r/d} k$.
It follows therefore that
$$ \dim \vscomponent {V^r_n} k = \sum_{\divides d {\gcd (r,n)}} {\SFrac d r \ordercount r {\frac{r}{d}} k },$$
which yields the required equality of generating functions.
\end{proof}

\begin{propn}\label{orderreductionpropn}
For any $\zplus$-graded vector space $V$ and positive integers $r,d$ with $\divides d r$, 
$$
\ordergf r d (\indet{X}) = \ordergf d d (\indet{X}^{\frac r d})
$$
\end{propn}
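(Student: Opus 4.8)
The plan is to exhibit, for positive integers with $\divides d r$, an explicit bijection between the order-$d$ elements of $\params r k$ and the order-$d$ elements of $\params d {k'}$, with the grading indices constrained by $k = \frac{r}{d}\hspace{0.1em}k'$. The key point is a periodicity statement. If $I = ((k_1,s_1),\dots,(k_r,s_r)) \in \params r k$ has $\order I = d$, then in particular $(\paramcycle r)^d(I) = I$; since $\divides d r$, this forces $(k_{i+d},s_{i+d}) = (k_i,s_i)$ for all $i$, the indices read modulo $r$. Hence $I$ is the $(r/d)$-fold repetition of its initial segment $J := ((k_1,s_1),\dots,(k_d,s_d))$, and comparing the sums of grading components gives $\sum_{i=1}^r k_i = \frac{r}{d}\sum_{i=1}^d k_i$; that is, $k = \frac{r}{d}\hspace{0.1em}k'$ where $J \in \params d {k'}$.

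First I would check that truncation $I \mapsto J$ and $(r/d)$-fold repetition $J \mapsto (J,J,\dots,J)$ are mutually inverse and restrict to bijections
$$ \set{ I \in \params r k | \order I = d } \;\longleftrightarrow\; \set{ J \in \params d {k'} | \order J = d }, \qquad k = \frac{r}{d}\hspace{0.1em}k'. $$
The only subtlety is that the two order conditions must correspond. Using that the component sequence of $I$ is that of $J$ repeated with period $d$, a short computation shows that $(\paramcycle r)^e(I) = I$ if and only if $(\paramcycle d)^e(J) = J$, for every positive divisor $e$ of $d$. Since $\order J$ divides $d$ automatically, and $\order I$ divides $d$ as well --- by hypothesis in the truncation case, or because $(\paramcycle r)^d(I) = I$ by construction in the repetition case --- it follows that $\order I = \order J$ whenever $I$ is the $(r/d)$-fold repetition of $J$; in particular the conditions $\order I = d$ and $\order J = d$ match. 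I would also record that $\params r k$ contains no element of order $d$ unless $\frac r d$ divides $k$, since any such element is the $(r/d)$-fold repetition of a length-$d$ tuple; thus $\ordercount r d k = 0$ in that case.

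Combining these observations, $\ordercount r d k = \ordercount d d {k'}$ whenever $k = \frac{r}{d}\hspace{0.1em}k'$, and $\ordercount r d k = 0$ otherwise; multiplying by $\indet{X}^k$ and summing over $k$ yields
$$ \ordergf r d (\indet{X}) = \sum_{k' \geqslant 0} \ordercount d d {k'}\hspace{0.15em}\indet{X}^{\frac{r}{d}k'} = \ordergf d d \bigl(\indet{X}^{\frac{r}{d}}\bigr), $$
which is the assertion. The main obstacle, such as it is, is purely combinatorial bookkeeping: one must keep ``order exactly $d$'', rather than merely ``order dividing $d$'', stable under truncation and repetition, and correctly track the substitution $\indet{X} \mapsto \indet{X}^{r/d}$ of the grading variable that the repetition forces. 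Both become transparent once the periodicity structure of the order-$d$ tuples has been made explicit, as above.
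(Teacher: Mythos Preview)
Your proposal is correct and follows essentially the same approach as the paper: both establish the bijection between order-$d$ elements of $\params r k$ and of $\params d {kd/r}$ via truncation to the first $d$ coordinates (with $(r/d)$-fold repetition as the inverse), and then read off the generating-function identity. You are more careful than the paper about verifying that the order is preserved under truncation and repetition and about the vanishing of $\ordercount r d k$ when $r/d$ does not divide $k$, but the underlying idea is identical.
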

\begin{proof}
Suppose that $k \geqslant 0$, that
$$ I = ( (k_1, s_1), \dots, (k_r, s_r) ) \in \params r k$$
and that $\order I = d$.
Then 
$$ I' = ( (k_1, s_1), \dots, (k_d, s_d) ) \in \params d {\frac{kd}{r}}.$$
and $\order {I'} = d$.
This establishes a bijection between order-$d$ elements of the sets $\params r k $ and $\params d {\frac{kd}{r}}$, and so $\ordercount r d k = \ordercount d d {\frac{kd}{r}}$.
Therefore
\begin{eqnarray*}
\ordergf r d (\indet X) &=& \sum_{k \geqslant 0}{\ordercount d d {\SFrac{kd}{r}} \hspace{0.1em} \indet{X}^k } \\
&=& \sum_{k \geqslant 0}{\ordercount d d k \hspace{0.1em} (\indet{X}^{\frac r d})^k }
\\
&=& \ordergf d d (\indet{X}^{\frac r d}).\mqed
\end{eqnarray*}
\end{proof}

It follows immediately from Proposition \ref{orderreductionpropn} and equation (\ref{orderequation1}) that
\begin{equation}\label{orderequation2}
(\vscharacter V (\indet{X}))^r = \sum_{\divides d r} {\ordergf d d (\indet{X}^{\frac r d})}.
\end{equation}

\begin{propn}\label{orderformula}
For any $\zplus$-graded vector space $V$ and $r > 0$,
$$
\ordergf r r (\indet{X}) = \sum_{\divides d r} \moebiusmu (d) \left ( \vscharacter V (\indet{X}^d) \right )^{\frac r d}.
$$
\end{propn}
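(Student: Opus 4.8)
The plan is to derive the formula from equation (\ref{orderequation2}) by M\"obius inversion; equation (\ref{orderequation2}) already expresses $\bigl(\vscharacter V(\indet{X})\bigr)^r$ as a sum of the generating functions $\ordergf d d$ evaluated at powers of $\indet{X}$, and all the series in sight lie in $\zplus[[\indet{X}]]$, so the manipulations below are legitimate termwise. First I would apply (\ref{orderequation2}) not only for $r$ but for every quotient $r/e$ with $e$ a positive divisor of $r$, after the substitution $\indet{X}\mapsto\indet{X}^e$. Using $e\cdot(r/e)/d = r/d$, this gives, for each $\divides e r$,
$$ \bigl(\vscharacter V(\indet{X}^e)\bigr)^{r/e} = \sum_{\divides d {(r/e)}} \ordergf d d (\indet{X}^{r/d}). $$

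Next I would form the weighted sum $\sum_{\divides e r} \moebiusmu(e)\bigl(\vscharacter V(\indet{X}^e)\bigr)^{r/e}$ and interchange the two summations. The set of index pairs $\{(e,d) : \divides e r,\ \divides d {(r/e)}\}$ is exactly $\{(e,d) : \divides {de} r\}$, so after the swap the expression becomes $\sum_{\divides d r} \ordergf d d (\indet{X}^{r/d}) \sum_{\divides e {(r/d)}} \moebiusmu(e)$. By the fundamental property (\ref{moebiusproperty}) of the M\"obius function, the inner sum equals $\kronecker {(r/d)} 1$; it therefore vanishes unless $d = r$, in which case it equals $1$. Hence the double sum collapses to the single surviving term $\ordergf r r (\indet{X}^{r/r}) = \ordergf r r (\indet{X})$, and renaming the summation index $e$ back to $d$ yields precisely the asserted identity.

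I do not expect a serious obstacle here: the only points needing care are the reindexing of the double sum --- the identification $\{(e,d) : \divides e r,\ \divides d {(r/e)}\} = \{(e,d) : \divides {de} r\}$ --- and the bookkeeping of the exponent substitutions $\indet{X}\mapsto\indet{X}^e$, both of which are routine. As an alternative one could argue by strong induction on $r$: solve (\ref{orderequation2}) for $\ordergf r r(\indet{X})$, substitute the inductive expressions for $\ordergf d d(\indet{X})$ with $d < r$, and simplify using (\ref{moebiusproperty}); this requires essentially the same M\"obius-theoretic computation.
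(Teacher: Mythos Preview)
Your argument is correct. You apply M\"obius inversion directly: starting from equation~(\ref{orderequation2}), you substitute $\indet{X}\mapsto\indet{X}^e$ with $r$ replaced by $r/e$, weight by $\moebiusmu(e)$, swap sums via the identification $\{(e,d):\divides e r,\ \divides d {(r/e)}\}=\{(e,d):\divides{de}{r}\}$, and collapse using~(\ref{moebiusproperty}). Each step is sound.

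The paper instead proves the formula by strong induction on $r$: it isolates $\ordergf s s(\indet{X})$ from~(\ref{orderequation2}), inserts the inductive expressions for the smaller $\ordergf d d$, and then reindexes (invoking Lemma~\ref{divisorlemma}) before applying~(\ref{moebiusproperty}). This is precisely the alternative you sketch at the end of your proposal. Your direct inversion is shorter and avoids both the induction and the explicit appeal to Lemma~\ref{divisorlemma}; the paper's route, on the other hand, makes the recursive structure of the $\ordergf d d$ more visible. The two arguments are equivalent in content --- both amount to the classical M\"obius inversion of the summatory relation~(\ref{orderequation2}).
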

\begin{proof}
The claim is trivial if $r=1$, so suppose that $s > 1$ and that the claim holds for all $0 < r < s$.
Then:
\begin{eqnarray*}
\ordergf s s (\indet X)
&=& \left(\vscharacter V (\indet X)\right)^s- \sum_{\divides d s, d \ne s} \ordergf d d (\indet{X}^{\frac r d}) \ENote{by equation (\ref{orderequation2})} \\
	&=& \left(\vscharacter V (\indet X)\right)^s- \sum_{\divides d s, d \ne s} \sum_{\divides {d'} d} \moebiusmu(d') \left ( \vscharacter V (\indet{X}^{\frac{s d'} d}) \right )^{\frac d {d'}} \ENote{by inductive hypothesis} \\
	&=& \left(\vscharacter V (\indet X)\right)^s- \sum_{\divides e s, e \ne 1} \hspace{0.1em} \big( \sum_{\divides d e, d \ne e} \moebiusmu(d) \big )\hspace{0.1em} \left ( \vscharacter V (\indet{X}^e) \right )^{\frac s e}\\
	&& \Note{write $e = \SFrac{s d'}{d}$ and use Lemma \ref{divisorlemma}} \\
	&=& \left(\vscharacter V (\indet X)\right)^s- \sum_{\divides e s, e \ne 1} ( - \moebiusmu(e) ) \left ( \vscharacter V (\indet{X}^e) \right )^{\frac s e} \ENote{by equation (\ref{moebiusproperty})}\\
	&=&  \sum_{\divides e s} \moebiusmu (e) \left ( \vscharacter V (\indet{X}^e) \right )^{\frac s e},
\end{eqnarray*}
and so the claim holds for $s$ also.
\end{proof}

\begin{theorem}\label{semiinvarianttheorem}
For any $\zplus$-graded vector space $V$, $r > 0$ and $n \in \Z$,
$$\vscharacter{ V^r_n} (\indet X) = \frac{1}{r} \sum_{\divides d r } { \ramanujan d n \left ( \vscharacter{V}(\indet{X}^d) \right )^{\frac r d}}.$$
\end{theorem}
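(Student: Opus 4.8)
The plan is to deduce the theorem from Propositions \ref{ordercountingpropn1}, \ref{orderreductionpropn} and \ref{orderformula} together with the identity \eqref{ramanujanidentities}, by reorganising a double sum over divisors. Starting from Proposition \ref{ordercountingpropn1},
$$ \vscharacter{V^r_n}(\indet X) = \frac 1r \sum_{\divides d {\gcd(r,n)}} d\, \ordergf r {\frac rd}(\indet X), $$
the first step is to replace each $\ordergf r {r/d}$ by a generating function with equal subscripts: since $\divides d {\gcd(r,n)}$ forces $\divides d r$, Proposition \ref{orderreductionpropn} (with its parameter ``$d$'' taken to be $r/d$) gives $\ordergf r {r/d}(\indet X) = \ordergf{r/d}{r/d}(\indet X^d)$, and then Proposition \ref{orderformula} expands
$$ \ordergf{r/d}{r/d}(\indet X^d) = \sum_{\divides e {r/d}} \moebiusmu(e)\,\bigl(\vscharacter V(\indet X^{de})\bigr)^{\frac r{de}}. $$
Substituting back presents $\vscharacter{V^r_n}(\indet X)$ as $\frac 1r$ times a double sum over pairs $(d,e)$ with $\divides d {\gcd(r,n)}$ and $\divides e {r/d}$.

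The crux is then to reindex this double sum by the product $f = de$. For any contributing pair one has $\divides f r$; conversely, for a fixed divisor $f$ of $r$ the contributing pairs are exactly $(d, f/d)$ with $\divides d f$ and $\divides d {\gcd(r,n)}$ --- the auxiliary condition $\divides {f/d}{r/d}$ being automatic once $\divides f r$ --- and, because $\divides f r$ gives $\gcd(f,r) = f$, the two divisibility conditions on $d$ amalgamate into the single condition $\divides d {\gcd(f,n)}$. Hence the coefficient of $\bigl(\vscharacter V(\indet X^f)\bigr)^{r/f}$ in the rearranged expression is $\frac1r\sum_{\divides d {\gcd(f,n)}} d\,\moebiusmu(f/d)$, which by \eqref{ramanujanidentities}, read with $f$ in place of $r$, equals $\frac1r\,\ramanujan f n$. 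Summing over the divisors $f$ of $r$ then yields the asserted formula.

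I expect the only genuine obstacle to be the careful bookkeeping of the divisor substitution $f = de$: checking that every pair $(d,e)$ is counted exactly once and that the constraint $\divides d {\gcd(r,n)}$ combines with $\divides d f$ to give precisely $\divides d {\gcd(f,n)}$. Lemma \ref{divisorlemma} is available should one want a cleaner justification of the interchange of summation order. Apart from this reindexing, the argument is a sequence of direct substitutions, and no further ideas are required.
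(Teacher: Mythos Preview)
Your proposal is correct and follows essentially the same route as the paper: apply Proposition~\ref{ordercountingpropn1}, then Proposition~\ref{orderreductionpropn}, then Proposition~\ref{orderformula}, reindex the resulting double sum by the product $f=de$, and recognise the inner sum as $\ramanujan f n$ via \eqref{ramanujanidentities}. Your justification of the reindexing (that $d\mid f$ together with $d\mid\gcd(r,n)$ reduces to $d\mid\gcd(f,n)$ once $f\mid r$) is exactly the point the paper passes over tersely, so nothing further is needed.
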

\begin{proof}
For any $n \in \Z$,
\begin{eqnarray*}
\vscharacter {V^r_n} (\indet X) &=& \frac 1 r \sum_{\divides d {\gcd (r,n)}} d \hspace{0.1em}\ordergf r {\frac r d} (\indet X) \ENote{by Proposition \ref{ordercountingpropn1}} \\
&=& \frac 1 r \sum_{\divides d {\gcd (r,n)}} d \hspace{0.1em}\ordergf {\frac r d} {\frac r d} ( \indet{X}^d) \ENote{by Proposition \ref{orderreductionpropn}}\\
&=& \frac 1 r \sum_{\divides d {\gcd (r,n)}} d \sum_{\divides {d'} {\frac r d}} \moebiusmu(d') \left( \vscharacter V (\indet{X}^{dd'}) \right)^{\frac r {d d'}}  \ENote{by Proposition \ref{orderformula}} \\
&=& \frac 1 r \sum_{\divides e r} \big ( \sum_{\divides d {\gcd (e,n)}} {d \hspace{0.1em} \moebiusmu(\frac e d)} \big ) \left ( \vscharacter V (\indet{X}^e) \right )^{\frac r e} \\
&=& \frac 1 r \sum_{\divides e r}{ \ramanujan e n \left ( \vscharacter V (\indet{X}^e) \right )^{\frac r e}},
\end{eqnarray*}
where the last equality follows from equation (\ref{ramanujanidentities}).
\end{proof}
\end{section}

\begin{section}{Character Formulae}
In this section, we show that if $\varphi \in \exppoly$, then the module $\irred \varphi$ is an irreducible highest-weight module for the truncated current Lie algebra $\epquotient \g \varphi$.
An explicit formula for the character of such a module was obtained in \cite{WilsonTCLA}.
Therefore, we are able to derive an explicit formula for $\exppolychar \varphi$ by employing the results of Sections \ref{loopmodsection} and \ref{cyclicsection}.

\begin{subsection}{Modules for truncated current Lie algebras}
Associated to any Lie algebra $\mathfrak a$  and non-zero $\varphi \in \exppoly$ is the \newterm{truncated current Lie algebra} $\epquotient {\mathfrak a} \varphi$,  
$$\epquotient {\mathfrak a} \varphi = \mathfrak{a} \otimes \frac{\laurentabbrev}{\charpoly \varphi \laurentabbrev},$$ 
with the Lie bracket given by equation \eqref{loopalgebrabracket}.

\begin{propn}\label{charpolyactstrivially}
Suppose that $\varphi \in \exppoly$ is non-zero.
Then the defining ideal $\g \otimes \charpoly \varphi \laurentabbrev \subset \ghat$ acts trivially on the $\ghat$-module $\irred \varphi$, and so $\irred \varphi$ is a $\epquotient \g \varphi$-module.
\end{propn}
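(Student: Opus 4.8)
The plan is to show that $\g \otimes \charpoly \varphi \laurentabbrev$ annihilates the highest-weight vector $\gen \varphi$, and then to deduce that it annihilates all of $\irred \varphi$ because the ideal is, in a suitable sense, central modulo itself. First I would recall that $\irred \varphi$ is a quotient of $\ind \varphi = \Ind{\hhat + \ghat_{+}}{\ghat}{\K \gen \varphi}$, so by the Poincaré--Birkhoff--Witt theorem $\irred \varphi$ is spanned by vectors of the form $\slf \otimes \indett^{n_1} \cdots \slf \otimes \indett^{n_k} \cdot \gen \varphi$. Since $\g \otimes \charpoly \varphi \laurentabbrev$ is an ideal of $\ghat$, it suffices to check that it kills $\gen \varphi$: for then, writing $J = \g \otimes \charpoly \varphi \laurentabbrev$, any $y \in J$ and any PBW monomial $u$ satisfy $y u \cdot \gen \varphi = u y \cdot \gen \varphi + [y,u]\cdot \gen \varphi$, and $[y,u] \in \UEA{\ghat} \cdot J$ by the ideal property; an induction on the length $k$ of the monomial then shows $J \cdot \irred \varphi = 0$.

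So the task reduces to showing $x \otimes \charpoly \varphi (\indett)\, \indett^j \cdot \gen \varphi = 0$ for each $x \in \{\sle, \slf, \slh\}$ and each $j \in \Z$. For $x = \sle$ this is immediate because $\ghat_{+}$ annihilates $\gen \varphi$. For $x = \slh$, the action is $\slh \otimes a \cdot \gen \varphi = (a \cdot \varphi)(0)\, \gen \varphi$ by definition of $\K \gen \varphi$, so I need $(\charpoly \varphi(\indett)\,\indett^j \cdot \varphi)(0) = 0$; but $\charpoly \varphi \in \annihilator \varphi$ means $\charpoly \varphi \cdot \varphi = 0$ in $\allfunctions$, hence $\charpoly \varphi(\indett)\,\indett^j \cdot \varphi = \indett^j \cdot (\charpoly \varphi \cdot \varphi) = 0$, and evaluating at $0$ gives zero. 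For $x = \slf$, the vector $\slf \otimes \charpoly \varphi(\indett)\,\indett^j \cdot \gen \varphi$ is a maximal (i.e.\ $\ghat_{+}$-highest) weight vector of weight strictly below that of $\gen \varphi$ in the $-\zplus \groot$ direction, so if it is non-zero it generates a proper submodule of $\ind \varphi$; since $\irred \varphi$ is the unique irreducible quotient, this vector maps to zero in $\irred \varphi$. (Alternatively, one checks it is a singular vector and invokes that $\gen \varphi$ generates the unique maximal submodule.)

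The one point that needs care — and I expect this to be the main obstacle — is the $\slf$ case: one must verify that $\slf \otimes \charpoly \varphi(\indett)\,\indett^j \cdot \gen \varphi$ really is annihilated by $\ghat_{+}$, so that it is genuinely a singular vector and not merely a weight vector. This follows from a short bracket computation: for $\sle \otimes \indett^m \in \ghat_{+}$,
$$
\sle \otimes \indett^m \cdot \big(\slf \otimes c\,\indett^j \cdot \gen \varphi\big) = \slh \otimes c\,\indett^{m+j} \cdot \gen \varphi + \slf \otimes c\,\indett^j \cdot \big(\sle \otimes \indett^m \cdot \gen \varphi\big),
$$
where $c = \charpoly \varphi(\indett)$; the second term vanishes since $\ghat_{+} \cdot \gen \varphi = 0$, and the first equals $(c\,\indett^{m+j} \cdot \varphi)(0)\,\gen \varphi = 0$ by the $\slh$ computation above. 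Once this is in hand, the argument closes: $J$ kills $\gen \varphi$, hence kills $\irred \varphi$ by the ideal-plus-induction argument, and therefore $\irred \varphi$ descends to a module over $\ghat / J = \epquotient \g \varphi$.
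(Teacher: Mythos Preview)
Your proof is correct, but it takes a different route from the paper. The paper argues from the other direction: it constructs the one-dimensional $\epquotient{\h}{\varphi}$-module $\K\gen{+}$ (this is well-defined precisely because $\charpoly\varphi$ annihilates $\varphi$), induces to $\epquotient{\g}{\varphi}$, takes the irreducible quotient $L$, and then pulls $L$ back to $\ghat$ via the surjection $\ghat \twoheadrightarrow \epquotient{\g}{\varphi}$. Since $L$ is an irreducible $\ghat$-module with the same highest weight as $\irred\varphi$, uniqueness forces $\irred\varphi \cong L$, and the ideal acts trivially by construction. Your approach instead verifies directly that $J = \g\otimes\charpoly\varphi\laurentabbrev$ kills $\gen\varphi$ in $\irred\varphi$ --- the $\slf$ case being handled by recognising a singular vector --- and then propagates using $J\,\UEA{\ghat} = \UEA{\ghat}\,J$. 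Both arguments ultimately rest on the uniqueness of the irreducible quotient (you use it to kill the singular vector, the paper uses it to identify $L$ with $\irred\varphi$); the paper's version is a little slicker in that it avoids the case analysis on $\sle,\slh,\slf$ and the explicit bracket computation, while yours is more self-contained and makes the mechanism of annihilation transparent.
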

\begin{proof}
Let $\K \gen{+}$ denote the one-dimensional $\hhat$-module defined by 
$$ \slh \otimes a \cdot \gen{+} = (a \cdot \varphi) (0) \gen{+}, \qquad a \in \laurentabbrev.$$
Then by definition of the characteristic polynomial $\charpoly \varphi$, the subalgebra $\h \otimes \charpoly \varphi \laurentabbrev \subset \hhat$ acts trivially upon $\gen{+}$, and so $\K \gen{+}$ may be considered as an $\epquotient \h \varphi$-module.
Let $\epquotient {\g_{+}} \varphi \cdot \gen{+} = 0$, and let 
$$ M = \Ind{\epquotient \h \varphi + \epquotient {\g_{+}} \varphi}{\epquotient \g \varphi} {\K \gen{+}} $$
denote the induced $\epquotient \g \varphi$-module.
Denote by $L$ the unique irreducible quotient of $M$.
Then $L$ is a $\ghat$-module, via the canonical epimorphism
$\ghat \epiarrow \epquotient \g \varphi$,
and is irreducible with highest-weight defined by the function $\varphi$.
Hence $\irred \varphi \cong L$ as $\ghat$-modules, and the claim follows from the construction of $L$.
\end{proof}
\end{subsection}

\begin{subsection}{Tensor products}
\begin{propn}\label{TensorSimplePropn}
Let $\varphi_1, \varphi_2 \in \exppoly$.  Then
$$\textstyle \irred { \varphi_1 + \varphi_2 } \cong \irred {\varphi_1} \bigotimes \irred {\varphi_2},$$
as $\ghat$-modules if $\charpoly {\varphi_1}$ and $\charpoly {\varphi_2}$ are co-prime.
\end{propn}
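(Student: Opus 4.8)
The plan is to realise $\irred{\varphi_1}\otimes\irred{\varphi_2}$ as an irreducible $\ghat$-module generated by a maximal vector carrying the highest-weight data of $\gen{\varphi_1+\varphi_2}$, and then to invoke the uniqueness of the irreducible highest-weight module. First I would record that, by the coproduct formula for the $\ghat$-action on a tensor product, the vector $v=\gen{\varphi_1}\otimes\gen{\varphi_2}$ satisfies $\ghat_{+}\cdot v=0$ and $\slh\otimes a\cdot v=\big((a\cdot\varphi_1)(0)+(a\cdot\varphi_2)(0)\big)v=(a\cdot(\varphi_1+\varphi_2))(0)\,v$ for every $a\in\laurentabbrev$, while every $\slh$-weight of $\irred{\varphi_i}$ lies in $\frac{\varphi_i(0)}{2}\groot-\zplus\groot$, so every weight of $\irred{\varphi_1}\otimes\irred{\varphi_2}$ lies in $\frac{(\varphi_1+\varphi_2)(0)}{2}\groot-\zplus\groot$ and $v$ is maximal. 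Granting for the moment that $\irred{\varphi_1}\otimes\irred{\varphi_2}$ is irreducible, it is then generated by $v$, so the $\ghat$-homomorphism $\ind{\varphi_1+\varphi_2}\to\irred{\varphi_1}\otimes\irred{\varphi_2}$ supplied by the universal property ($\gen{\varphi_1+\varphi_2}\mapsto v$) is surjective with irreducible image and hence factors through an isomorphism $\irred{\varphi_1+\varphi_2}\cong\irred{\varphi_1}\otimes\irred{\varphi_2}$.

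So the substance lies in the irreducibility of $\irred{\varphi_1}\otimes\irred{\varphi_2}$, and this is where co-primality is used. By Proposition~\ref{charpolyactstrivially} the ideal $\g\otimes\charpoly{\varphi_i}\laurentabbrev$ annihilates $\irred{\varphi_i}$, so the $\ghat$-action on $\irred{\varphi_1}\otimes\irred{\varphi_2}$, built from the two actions through the coproduct, factors through $\g\otimes\laurentabbrev/(\charpoly{\varphi_1}\charpoly{\varphi_2}\laurentabbrev)$. Since $0$ is not a root of $\charpoly{\varphi_i}$ (Proposition~\ref{charpolypropn}), the inclusion $\polyring\K t\hookrightarrow\laurentabbrev$ induces an isomorphism $\polyring\K t/\charpoly{\varphi_i}\polyring\K t\cong\laurentabbrev/\charpoly{\varphi_i}\laurentabbrev$, and the Chinese Remainder Theorem --- applicable precisely because $\charpoly{\varphi_1}$ and $\charpoly{\varphi_2}$ are co-prime --- gives a ring isomorphism $\laurentabbrev/(\charpoly{\varphi_1}\charpoly{\varphi_2}\laurentabbrev)\cong\laurentabbrev/\charpoly{\varphi_1}\laurentabbrev\times\laurentabbrev/\charpoly{\varphi_2}\laurentabbrev$, hence a Lie-algebra isomorphism of $\g\otimes\laurentabbrev/(\charpoly{\varphi_1}\charpoly{\varphi_2}\laurentabbrev)$ with the direct sum $\epquotient\g{\varphi_1}\oplus\epquotient\g{\varphi_2}$. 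Under this identification $\irred{\varphi_1}\otimes\irred{\varphi_2}$ is exactly the external tensor product of the $\epquotient\g{\varphi_i}$-modules $\irred{\varphi_i}$, and since $\ghat$ maps onto $\epquotient\g{\varphi_1}\oplus\epquotient\g{\varphi_2}$, irreducibility over $\ghat$ amounts to irreducibility over this direct sum.

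For the latter I would use the standard fact that if $M$ is a simple $\UEA{\epquotient\g{\varphi_1}}$-module whose algebra of $\ghat$-endomorphisms is $\K$ and $N$ is a simple $\UEA{\epquotient\g{\varphi_2}}$-module, then $M\otimes_\K N$ is simple over $\UEA{\epquotient\g{\varphi_1}}\otimes_\K\UEA{\epquotient\g{\varphi_2}}=\UEA{\epquotient\g{\varphi_1}\oplus\epquotient\g{\varphi_2}}$: from a nonzero submodule one extracts a pure tensor using the Jacobson density theorem for $N$, and then the simplicity of $M$ and of $N$ forces that pure tensor to generate everything. The Schur hypothesis holds here because $\irred\varphi$ is cyclic on the weight vector $\gen\varphi$, which spans the one-dimensional top $\slh$-weight space, so any $\ghat$-endomorphism preserves that line and is scalar; the same applies to $\irred{\varphi_2}$. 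As a consistency check one notes that $\charpoly{\varphi_1+\varphi_2}=\charpoly{\varphi_1}\charpoly{\varphi_2}$, immediate from Proposition~\ref{charpolypropn} since co-primality of the characteristic polynomials forces the root sets of $\varphi_1$ and $\varphi_2$ to be disjoint, so that the highest-weight datum of $v$ is genuinely that of $\gen{\varphi_1+\varphi_2}$. The step I expect to be fiddliest is the dictionary between the coproduct $\ghat$-action on $\irred{\varphi_1}\otimes\irred{\varphi_2}$ and the external tensor-product action over the direct sum --- in particular that the two summands act on the two tensor factors separately and commute --- rather than any single computation; the density argument, though routine, is the only non-formal ingredient and could instead be cited.
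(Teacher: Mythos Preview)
Your proposal is correct and follows essentially the same route as the paper: reduce to the truncated algebras via Proposition~\ref{charpolyactstrivially}, use the Chinese Remainder Theorem to split $\epquotient\g{\varphi_1+\varphi_2}\cong\epquotient\g{\varphi_1}\oplus\epquotient\g{\varphi_2}$, establish irreducibility of the external tensor product, and conclude by uniqueness of the irreducible highest-weight module. The only notable difference is in the justification of the tensor-product irreducibility: the paper cites Quillen (that $\UEA{\epquotient\g{\varphi_i}}$ is Schurian because $\epquotient\g{\varphi_i}$ is finite-dimensional over an algebraically closed field) and Bavula (that Schurian implies tensor-simple), whereas you give a self-contained argument using the one-dimensional top weight space to verify $\End_{\ghat}\irred{\varphi_i}=\K$ directly and then invoke Jacobson density --- both are valid, and yours avoids the external references at the cost of writing out a standard lemma.
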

\begin{proof}
Let $\varphi = \varphi_1 + \varphi_2$.
Then $\charpoly \varphi = \charpoly {\varphi_1} \charpoly {\varphi_2}$ since $\charpoly {\varphi_1}$ and $\charpoly {\varphi_2}$ are co-prime.
By Proposition \ref{charpolyactstrivially}, $\irred \varphi$ is an irreducible module for $\epquotient \g \varphi$, and by the Chinese Remainder Theorem,
\begin{equation}\label{LADecomp}
\epquotient \g \varphi \cong \epquotient \g {\varphi_1} \oplus \epquotient \g {\varphi_2}.
\end{equation}
By Proposition \ref{charpolyactstrivially}, $\irred {\varphi_i}$ is a module for $\epquotient \g {\varphi_i} $, $i = 1,2$.
The Lie algebra $\epquotient \g {\varphi_i}$ is finite-dimensional, and $\K$ is algebraically closed, and so $\UEA{\epquotient \g {\varphi_i}}$ is Schurian \cite{Quillen1969}, $i=1,2$.
Thus $\UEA{\epquotient \g {\varphi_i}}$ is tensor-simple \cite{Bavula1995}, and so 
$ \irred {\varphi_1} \bigotimes \irred {\varphi_2} $
is an irreducible module for $\UEA{\epquotient \g {\varphi_1} } \bigotimes \UEA{\epquotient \g {\varphi_2}}$.
The decomposition \eqref{LADecomp} and the Poincar\'e-Birkhoff-Witt Theorem imply that
$$ \textstyle \UEA{\epquotient \g {\varphi_1}} \bigotimes \UEA{\epquotient \g {\varphi_2}} \cong \UEA{\epquotient \g \varphi},$$
and so $ \irred {\varphi_1} \bigotimes \irred {\varphi_2} $ is an irreducible module for $\epquotient \g \varphi$.
The irreducible highest-weight modules $\irred \varphi$ and $\irred {\varphi_1} \bigotimes \irred {\varphi_2}$ are of equal highest weight, by the Leibniz rule, and hence are isomorphic.
\end{proof}
\end{subsection}

\begin{subsection}{Semi-invariants of the modules $\irred \varphi$}
For any $\varphi \in \exppoly$, consider $\irred \varphi$ as a {$\zplus$-graded} vector space via
$$ \irred \varphi = \bigoplus_{k \geqslant 0}{\vscomponent {\irred \varphi} k}, \qquad \vscomponent {\irred \varphi} k = \weightspace {\irred \varphi} {\varphi (0) \frac{\groot}{2} - k \groot}\ , \quad k \geqslant 0.$$

\begin{propn}\label{recognitionofcyclicend}
Suppose that $\varphi \in \exppoly$ is non-zero and that $\varphi = \kronmod r \psi$, where $r = \deg \varphi$ and $\psi \in \exppoly$, as per Lemma \ref{expressionlemma}.
Then there exists an isomorphism
$$
\Omega : \irred \varphi \to {\irred \psi}^r
$$
of $\zplus$-graded vector spaces such that $\cycle r = \Omega \circ \irredcycle_\varphi \circ \Omega^{-1}$.
\end{propn}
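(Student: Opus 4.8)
The plan is to realise $\irred\varphi$ as an $r$-fold tensor product of copies of $\irred\psi$ by means of Proposition \ref{TensorSimplePropn}, and then to identify $\irredcycle_\varphi$ with the operator that cyclically permutes the tensor factors. Fix $\zeta = \rou r$ and write $\psi = \sum_j a_j \expmap{\mu_j}$ as a finite sum of products of polynomials $a_j$ with exponentials, the $\mu_j \in \kcross$ being distinct. For $i \in \Zmod r$ set $\psi_i = \sum_j a_j \expmap{\zeta^i \mu_j} \in \exppoly$, so that $\psi_0 = \psi$ and $\psi_i(0) = \psi(0)$ for every $i$. A short computation using $\kronmod r = \sum_{\eta^r = 1} \expmap{\eta}$ gives $\varphi = \kronmod r \psi = \sum_{i \in \Zmod r} \psi_i$, so that $\varphi(0) = r\psi(0)$. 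By Proposition \ref{charpolypropn}, $\charpoly{\psi_i}$ differs by a non-zero scalar from $\charpoly{\psi}(\zeta^{-i}\indett)$; hence, by Lemma \ref{expressionlemma}, the polynomials $\charpoly{\psi_i}$, $i \in \Zmod r$, are pairwise co-prime.

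Applying Proposition \ref{TensorSimplePropn} repeatedly --- at each step the characteristic polynomial of one summand is co-prime to the product of the characteristic polynomials of the remaining summands --- yields an isomorphism of $\ghat$-modules $\Omega_0 : \irred\varphi \to \irred{\psi_0} \otimes \cdots \otimes \irred{\psi_{r-1}}$ carrying $\gen\varphi$ to a non-zero scalar multiple of $\gen{\psi_0} \otimes \cdots \otimes \gen{\psi_{r-1}}$, which after rescaling $\Omega_0$ we may take to be $\gen{\psi_0} \otimes \cdots \otimes \gen{\psi_{r-1}}$ itself. For each $i$, the rule $\indett \mapsto \zeta^i \indett$ is an automorphism of $\ghat$ fixing $\g = \g \otimes \indett^0$ pointwise; twisting $\irred\psi$ by this automorphism yields a $\ghat$-module whose highest weight, computed from $(a(\zeta^i\indett) \cdot \psi)(0) = (a \cdot \psi_i)(0)$, coincides with that of $\irred{\psi_i}$. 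Hence there is an isomorphism $\sigma_i : \irred\psi \to \irred{\psi_i}$ that is the identity on underlying vector spaces, respects the $\zplus$-grading, and sends $\gen\psi$ to $\gen{\psi_i}$. Put $\Omega = (\sigma_0^{-1} \otimes \cdots \otimes \sigma_{r-1}^{-1}) \circ \Omega_0$. Since $\varphi(0) = r\psi(0)$ and $\psi_i(0) = \psi(0)$ for all $i$, the $\zplus$-grading on $\irred\varphi$ corresponds under $\Omega_0$ to the tensor-product grading, so $\Omega : \irred\varphi \to {\irred\psi}^r$ is an isomorphism of $\zplus$-graded vector spaces with $\Omega(\gen\varphi) = \gen\psi \otimes \cdots \otimes \gen\psi$.

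It remains to verify $\cycle r = \Omega \circ \irredcycle_\varphi \circ \Omega^{-1}$. Transported along $\Omega$, the $\ghat$-module structure on ${\irred\psi}^r$ is the one in which $x \otimes a$ acts as the sum over $i \in \Zmod r$ of the operator acting on the $i$-th tensor factor by $x \otimes a(\zeta^i\indett)$. Using that $\cycle r$ carries the $i$-th tensor factor into position $i+1$ and that $a(\zeta^{-1}\indett)$ evaluated at $\zeta^{i+1}\indett$ equals $a(\zeta^i\indett)$, a direct calculation shows that $\cycle r((x \otimes a) \cdot v) = (x \otimes a(\zeta^{-1}\indett)) \cdot \cycle r(v)$ for this structure, and clearly $\cycle r$ fixes $\gen\psi \otimes \cdots \otimes \gen\psi$. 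By Proposition \ref{endomorphismpropn} and the construction of $\Omega$, the automorphism $\Omega \circ \irredcycle_\varphi \circ \Omega^{-1}$ enjoys the same two properties. But any two vector-space automorphisms of the irreducible $\ghat$-module ${\irred\psi}^r$ that are semilinear for $\indett \mapsto \zeta^{-1}\indett$ and fix the vector $\gen\psi \otimes \cdots \otimes \gen\psi$ must agree: the composite of one with the inverse of the other is a genuine $\ghat$-module endomorphism fixing $\gen\psi \otimes \cdots \otimes \gen\psi$, which generates the module, hence is the identity. Therefore $\cycle r = \Omega \circ \irredcycle_\varphi \circ \Omega^{-1}$.

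The only real difficulty here is bookkeeping. The indexing of the $\psi_i$, the direction of the twists $\indett \mapsto \zeta^{\pm i}\indett$, and the orientation of the cycle $\cycle r$ must be chosen compatibly so that the semilinearity identity carries the factor $\zeta^{-1}$ matching Proposition \ref{endomorphismpropn} rather than $\zeta$; with the exponent $+i$ in $\psi_i$ chosen as above, the conjugate of $\irredcycle_\varphi$ comes out as $\cycle r$ and not $\cycle r^{-1}$. The remaining ingredients --- the highest-weight computation identifying the twist of $\irred\psi$ with $\irred{\psi_i}$, and the compatibility of the $\zplus$-gradings --- are routine consequences of Propositions \ref{charpolypropn} and \ref{endomorphismpropn}.
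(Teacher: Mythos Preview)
Your proof is correct and rests on the same skeleton as the paper's --- decompose $\varphi = \sum_i \psi_i$ into summands with pairwise co-prime characteristic polynomials and invoke Proposition~\ref{TensorSimplePropn} to obtain $\irred\varphi \cong \bigotimes_i \irred{\psi_i}$ --- but the two arguments diverge in how they identify $\irredcycle_\varphi$ with $\cycle r$. The paper proceeds constructively: it uses the Chinese Remainder Theorem to manufacture explicit elements $a_{i,j} \in \laurentabbrev$ with $a_{i,j}(\indett) = a_i(\rou r^{-j}\indett)$ adapted to the factorisation $\charpoly\varphi = \prod_j \charpoly{\psi^j}$, reads off how $\irredcycle_\varphi$ permutes these generators, builds the identifications $\epsilon_j : \irred{\psi^j} \to \irred\psi$ from those permutations, and then verifies $\Omega \circ \irredcycle_\varphi \circ \Omega^{-1} = \cycle r$ by direct computation on a spanning set of homogeneous tensors. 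Your route is more conceptual: you realise each $\irred{\psi_i}$ as a twist $(\irred\psi)^{\tau_i}$, check once that both $\cycle r$ and the conjugate of $\irredcycle_\varphi$ are $(\indett \mapsto \zeta^{-1}\indett)$-semilinear automorphisms of the transported $\ghat$-module structure fixing $\gen\psi^{\otimes r}$, and then conclude by a Schur-type uniqueness argument. Your approach avoids basis bookkeeping and gives a cleaner reason why the conjugate must equal $\cycle r$; the paper's approach, in exchange, makes all the maps completely explicit and needs no appeal to irreducibility at the end. One small phrasing issue: saying $\sigma_i$ is ``the identity on underlying vector spaces'' only makes literal sense once you have chosen $(\irred\psi)^{\tau_i}$ as your model for $\irred{\psi_i}$; it would be clearer to say that $\sigma_i$ is the canonical $\ghat$-isomorphism between the twist and $\irred{\psi_i}$, which is then the identity at the level of vector spaces under that identification.
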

\begin{proof}
For $j \in \Zmod r$, write $\psi^j = \expmap{\rou{r}^{-j}} \psi$. 
Then $\charpoly \varphi = \prod_{j \in \Zmod r} \charpoly{\psi^j}$ is a decomposition of $\charpoly \varphi$ into co-prime factors, and $\varphi = \sum_{j \in \Zmod r}{ \psi^j}$.
By the Chinese Remainder Theorem, there exists a finite linearly independent set $\set{a_i | i \in I} \subset \laurentabbrev$ such that $\set{a_i + \charpoly{\psi} \laurentabbrev | i \in I}$ is a basis for $\laurentabbrev / \charpoly {\psi} \laurentabbrev$ and
$$ a_i \equiv 0 \pmod {\charpoly {\psi^j}}, \quad j \not \equiv 0 \pmod r, \quad i \in I, \quad j \in \Zmod r.$$
Write $a_{i,j}(\indett) = a_i (\rou{r}^{-j} \indett)$, $i \in I$, $j \in \Zmod r$.
Then by symmetry, $\set{a_{i,j} + \charpoly{\psi^j} \laurentabbrev | i \in I}$ is a basis for $\laurentabbrev / \charpoly{\psi^j} \laurentabbrev$ and 
$$
a_{i,j} \equiv 0 \pmod {\charpoly {\psi^k}}, \quad j \not \equiv k \pmod r, \quad i \in I, \quad j,k \in \Zmod r.
$$
For any $i \in I$ and $j \in \Zmod r$,
\begin{equation}\label{howitcycles}
\irredcycle_\varphi ( \slf \otimes a_{i,j} \cdot w) = \slf \otimes {a_{i,j} (\rou{r}^{-1} \indett)} \cdot \irredcycle_\varphi (w) = \slf \otimes a_{i, j+1} \cdot \irredcycle_\varphi (w), \qquad w \in \irred \varphi.
\end{equation}
By Proposition \ref{TensorSimplePropn}, there exists an isomorphism
$$
\ts \Upsilon : \irred \varphi \to \bigotimes_{j \in \Zmod r} \irred {\psi^j}
$$
of $\ghat$-modules, and we may assume that $\Upsilon (\gen \varphi) = \otimes_{j \in \Zmod r} \gen {\psi^j}$.
For any $k \in \Zmod r$, identify 
\begin{equation}\label{identification}
\ts \irred {\psi^k} = 1 \otimes \cdots \otimes \irred {\psi^k} \otimes \cdots \otimes 1\subset \bigotimes_{j \in \Zmod r}{\irred {\psi^j}}.
\end{equation}
Then $\irred {\psi^k}$ is generated by the action of the basis $\set{ \slf \otimes a_{i,k} | i \in I}$ of $\epquotient {\g_{-}} {\psi^k}$ on the highest-weight vector $\Upsilon(\gen \varphi)$. 
Therefore, modulo the identification (\ref{identification}), 
$$(\Upsilon \circ \irredcycle_{\varphi} \circ \Upsilon^{-1})(\irred {\psi^k}) \subset \irred {\psi^{k+1}}, \qquad k \in \Zmod r,$$
by equation (\ref{howitcycles}).
Since $\irredcycle_\varphi$ is an automorphism of the $\zplus$-graded vector space $\irred \varphi$, the restriction 
$$
(\Upsilon \circ \irredcycle_{\varphi} \circ \Upsilon^{-1}) :  \irred {\psi^k} \to \irred {\psi^{k+1}}, 
$$
is an isomorphism of the $\zplus$-graded vector spaces.
These isomorphisms obviously induce isomorphisms $\epsilon_j : \irred {\psi^j} \to \irred {\psi^0} = \irred \psi$, and
$$
\textstyle \epsilon_j : \prod_{i \in I} (\slf \otimes a_{i,j})^{k_i} \cdot \gen {\psi^j} \mapsto \prod_{i \in I} (\slf \otimes a_{i})^{k_i} \cdot \gen \psi,
$$
by equation (\ref{howitcycles}).
Let $\epsilon = \bigotimes_{j \in \Zmod r} \epsilon_j$, and write $\Omega$ for the composition
$$ \epsilon \circ \Upsilon : \irred \varphi \to \irred{\psi}^r.$$
The vector space $\irred{\psi}^r$ is spanned by the homogeneous tensors
$$
\textstyle \bigotimes_{j \in \Zmod r} \prod_{i \in I} (\slf \otimes a_i)^{k_{i,j}}  \gen \psi, \qquad k_{i,j} \geqslant 0.
$$
For any homogeneous tensor of this form
\begin{eqnarray*}
(\Omega \circ \irredcycle_\varphi \circ \Omega^{-1})& \cdot& \textstyle (\bigotimes_{j \in \Zmod r} \prod_{i \in I} (\slf \otimes a_i)^{k_{i,j}} \gen \psi) \\
&&= \textstyle \epsilon \circ (\Upsilon \circ \irredcycle_\varphi \circ \Upsilon^{-1}) (\bigotimes_{j \in \Zmod r} \prod_{i \in I} (\slf \otimes a_{i,j})^{k_{i,j}} \gen {\psi^j}) \\ 
&&= \textstyle \epsilon \circ (\Upsilon \circ \irredcycle_\varphi \circ \Upsilon^{-1}) (\prod_{j \in \Zmod r} \prod_{i \in I} (\slf \otimes a_{i,j})^{k_{i,j}} \cdot \otimes_{j \in \Zmod r} \gen {\psi^j}) \\ 
&&= \textstyle \epsilon (\prod_{j \in \Zmod r} \prod_{i \in I} (\slf \otimes a_{i,j})^{k_{i,j-1}} \cdot \otimes_{j \in \Zmod r} \gen {\psi^j}) \\ 
&&= \textstyle \epsilon (\bigotimes_{j \in \Zmod r} \prod_{i \in I} (\slf \otimes a_{i,j})^{k_{i,j-1}} \gen {\psi^j}) \\ 
&&= \textstyle \bigotimes_{j \in \Zmod r} \prod_{i \in I} (\slf \otimes a_i)^{k_{i,j-1}} \gen \psi \\
&&= \textstyle \cycle r (\bigotimes_{j \in \Zmod r} \prod_{i \in I} (\slf \otimes a_i)^{k_{i,j}} \gen \psi),
\end{eqnarray*}
where the second and fourth equalities are by construction of the polynomials $a_{i,j}$ and the Leibniz rule.
Therefore $\Omega \circ \irredcycle_\varphi \circ \Omega^{-1} = \cycle r$ as required.
\end{proof}
\end{subsection}

\begin{subsection}{Character Formulae}
\begin{theorem}\label{tclacharacter}
Suppose that $a \in \allfunctions$ is a polynomial function and that $\varphi = a \expmap \lambda$ for some $\lambda \in \kcross$.
Then
\begin{equation*}
\vscharacter {\irred \varphi} (\indet X) = 
\begin{cases}
\frac{1 - \indet{X}^{a + 1}}{1 - \indet X} &  \text{if $a \in \zplus$,}\\
\frac 1 {1- \indet X} & \text{otherwise.}
\end{cases}
\end{equation*}
\end{theorem}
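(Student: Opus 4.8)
The plan is to recognise $\irred\varphi$ as an irreducible highest-weight module for a truncated current Lie algebra and then read off its character from the formula of \cite{WilsonTCLA}. First I would dispose of $a=0$: then $\varphi=0$, the module $\irred\varphi$ is trivial, both sides equal $1$, and $0\in\zplus$. So assume $a\neq0$ and set $d=\deg a$, $N=d+1$. By Proposition \ref{charpolypropn} we have $\charpoly\varphi(\indett)=(\indett-\lambda)^{N}$, so $\laurentabbrev/\charpoly\varphi\laurentabbrev$ is the local algebra $\K[\indett-\lambda]/((\indett-\lambda)^{N})$ of dimension $N$, and $\epquotient\g\varphi$ is a truncated current Lie algebra. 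By Proposition \ref{charpolyactstrivially}, $\irred\varphi\cong L(\Lambda)$, the irreducible highest-weight $\epquotient\g\varphi$-module whose highest weight $\Lambda$ is determined by $\Lambda(\slh\otimes\indett^{m})=\varphi(m)=a(m)\lambda^{m}$; moreover the $\zplus$-grading imposed on $\irred\varphi$ is precisely the grading by the $\h=\h\otimes1$-weight (its degree-$k$ piece being the $\h$-weight space at depth $k$ below the highest weight), so $\vscharacter{\irred\varphi}(\indet X)$ is the generating function for the $\h$-weight multiplicities of $L(\Lambda)$. The only feature of $\Lambda$ I will need is its ``top'' value: expanding $(\indett-\lambda)^{d}$ in powers of $\indett$ gives $\Lambda(\slh\otimes(\indett-\lambda)^{d})=\lambda^{d}\sum_{j=0}^{d}\binom{d}{j}(-1)^{d-j}a(j)$, which is $d!\,\lambda^{d}$ times the leading coefficient of $a$, hence non-zero when $d\geqslant1$ (and equal to the constant $a$ when $d=0$).

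Next I would compute this generating function. Since $\g_{-}\otimes\laurentabbrev/\charpoly\varphi\laurentabbrev=\K\slf\otimes\laurentabbrev/\charpoly\varphi\laurentabbrev$ is abelian of dimension $N$, the Poincar\'e-Birkhoff-Witt theorem shows the Verma module $M(\Lambda)$ is free of rank one over a polynomial ring on $N$ generators, each of which lowers the $\h$-weight by $\groot$; hence $M(\Lambda)$ has $\h$-weight generating function $1/(1-\indet X)^{N}=1/(1-\indet X)^{\deg a+1}$. Passing from $M(\Lambda)$ to $L(\Lambda)$ is exactly where \cite{WilsonTCLA} enters: specialised to $\g=\SL2$, that character formula says $M(\Lambda)$ is already irreducible unless the truncated current algebra reduces to $\g$ itself — that is, $\charpoly\varphi=\indett-\lambda$ — and the $\g$-highest weight $\varphi(0)$ is a non-negative integer; equivalently, unless $d=0$ and $a\in\zplus$. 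Therefore, if $a\notin\zplus$ then $\irred\varphi=M(\Lambda)$ and $\vscharacter{\irred\varphi}(\indet X)=1/(1-\indet X)^{\deg a+1}$ (which is $1/(1-\indet X)$ precisely when $a$ is constant); and if $a\in\zplus$ then $\epquotient\g\varphi\cong\g$ and $\irred\varphi$ is the finite-dimensional irreducible $\g$-module of highest weight $a$, with weights $a\frac{\groot}{2},\,a\frac{\groot}{2}-\groot,\,\dots,\,-a\frac{\groot}{2}$, so $\vscharacter{\irred\varphi}(\indet X)=\sum_{k=0}^{a}\indet{X}^{k}=(1-\indet{X}^{a+1})/(1-\indet X)$.

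The entire substance lies in the cited character formula, i.e. in the determination of the maximal submodule of $M(\Lambda)$, and that is the step I expect to be the genuine difficulty; but it is precisely the content of \cite{WilsonTCLA} and is invoked wholesale. If one wanted a self-contained argument in this special case, one would show $M(\Lambda)$ has no singular vector strictly below its highest weight: any nonzero proper submodule would contain a vector of maximal weight, necessarily singular, and one checks (this is the heart of \cite{WilsonTCLA}) that no such vector exists once $d\geqslant1$ — the obstruction being that $\slh\otimes(\indett-\lambda)^{d}$ acts on the highest-weight vector by the non-zero scalar identified above, a fact one propagates through the $(\indett-\lambda)$-adic filtration of $\g\otimes\laurentabbrev/\charpoly\varphi\laurentabbrev$ — while for $d=0$ one is in the classical representation theory of $\SL2$. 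Assembling the two cases gives the stated formula.
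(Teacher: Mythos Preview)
Your argument follows the paper's own proof: both identify $\irred\varphi$ as an irreducible highest-weight module for the truncated current Lie algebra $\epquotient{\g}{\varphi}$ via Proposition~\ref{charpolyactstrivially}, compute the action of the top Cartan element $\slh\otimes(\indett-\lambda)^{\deg a}$ on the highest-weight vector (the paper via Lemma~\ref{annihilationlemma} part~\ref{annihilationlemma2}, you by direct expansion), and then split on $\deg a$, treating $\deg a=0$ by classical $\SL 2$ theory and $\deg a>0$ by Proposition~A.1 of \cite{WilsonTCLA}.

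Your parenthetical observation is well taken and worth making explicit. The content of \cite{WilsonTCLA} in this rank-one situation is indeed that the Verma module $M(\Lambda)$ for $\epquotient{\g}{\varphi}$ is irreducible once the top component $\Lambda(\slh\otimes(\indett-\lambda)^{\deg a})$ is nonzero, so the Poincar\'e series in the ``otherwise'' case is $1/(1-\indet X)^{\deg a+1}$, not $1/(1-\indet X)$. The displayed statement of the theorem is missing this exponent; your version is the correct one, and it is the only reading compatible with the exponent $M=\sum_i(\deg a_i+1)$ appearing in the denominator of $\fouriercoeff\varphi$ in the paper's main character formula (since $\vscharacter{\irred\psi}=\prod_i\vscharacter{\irred{\psi^i}}$ must account for all $M$ factors of $1-\indet X$). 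One small imprecision in your wording: it is not that $M(\Lambda)$ is irreducible whenever the truncation has positive nilpotency, but rather whenever the top component of $\Lambda$ is nonzero---which you have verified holds here.
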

\begin{proof}
Let $\Nilp = \deg a$, and write
$ \varphi = \sum_{k=0}^\Nilp a_k \epbasis \lambda k$.
By Proposition \ref{charpolyactstrivially}, $\irred \varphi$ is a module for the truncated current Lie algebra $\epquotient \g \varphi$.
The Cartan subalgebra of $\epquotient \g \varphi$ has a basis
$$ \set { \slh \otimes (\indett - \lambda)^k | 0 \leqslant k \leqslant \Nilp}.$$
By Lemma \ref{annihilationlemma},
$\slh \otimes (\indett - \lambda)^\Nilp$ acts on the highest-weight vector $\gen \varphi$ by the scalar 
\begin{equation}\label{topvalue}
((\indett - \lambda)^\Nilp \cdot \varphi) (0) = \Nilp ! \lambda^\Nilp a_\Nilp.
\end{equation}
If $\Nilp = 0$, then \eqref{topvalue} takes the value $a \in \K$, and so $\irred \varphi$ is the irreducible $\g$-module of highest weight $a$.  Therefore
\begin{equation*}
\vscharacter {\irred \varphi} (\indet X) = 
\begin{cases}
\frac{1 - \indet{X}^{a + 1}}{1 - \indet X} &  \text{if $a \in \zplus$,}\\
\frac 1 {1- \indet X} & \text{otherwise.}
\end{cases}
\end{equation*}
If $\Nilp > 0$, then $a_\Nilp$ is non-zero; thus \eqref{topvalue} is non-zero and the claim follows from Proposition A.1 of \cite{WilsonTCLA}.
\end{proof}

Suppose that $\varphi \in \exppoly$ is non-zero, $\deg \varphi = r$, and that $\psi \in \exppoly$ is given by Lemma \ref{expressionlemma}.
Then 
\begin{equation}\label{niceform}
\psi = \sum_i a_i \expmap {\lambda_i}
\end{equation}
for some finite collection of polynomial functions $a_i \in \allfunctions$ and distinct $\lambda_i \in \kcross$, such that if $(\lambda_i / \lambda_j)^r = 1$, then $i=j$.
\begin{theorem}
Suppose that $\varphi \in \exppoly$ is non-zero, $\deg \varphi = r$, and that
$$ \varphi = \kronmod r \sum_i a_i \expmap {\lambda_i}$$
where the $a_i \in \allfunctions$ and $\lambda_i \in \kcross$ are given by (\ref{niceform}).  Let
$$ \fouriercoeff \varphi (\indet Z)= \frac {\prod_{a_i \in \zplus} (1 - \indet{Z}^{a_i +1})}{(1 - \indet{Z})^M },$$
where $M = \sum_i (\deg a_i + 1)$ and the product is over those indices $i$ such that $a_i \in \zplus$.
Then
$$
\exppolychar \varphi = \indet{Z}^{\varphi (0) \frac{\groot}{2}} \cdot \frac{1}{r} \sum_{n \in \Z} \sum_{\divides d r} \ramanujan d n \left ( \fouriercoeff \varphi (\indet{Z}^{-d\groot}) \right ) ^{\frac r d} \indet{Z}^{n\imagroot}.
$$
\end{theorem}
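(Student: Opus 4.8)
The plan is to feed the description of $\exppolychar\varphi$ from Corollary~\ref{characterinvariant} into the generating-function identity of Theorem~\ref{semiinvarianttheorem}, after transporting everything from $\irred\varphi$ to ${\irred \psi}^r$. Fix $\psi\in\exppoly$ as produced by Lemma~\ref{expressionlemma}, so that $\varphi = \kronmod r\psi$, $\varphi(0)=\psi(0)$, and $\psi = \sum_i a_i\expmap{\lambda_i}$ with the $\lambda_i$ as in~\eqref{niceform}. By Proposition~\ref{recognitionofcyclicend} there is an isomorphism $\Omega:\irred\varphi\to{\irred \psi}^r$ of $\zplus$-graded vector spaces satisfying $\cycle r = \Omega\circ\irredcycle_\varphi\circ\Omega^{-1}$. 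As $\Omega$ both preserves the grading — carrying $\vscomponent{\irred\varphi}{k}=\weightspace{\irred\varphi}{\varphi(0)\frac{\groot}{2}-k\groot}$ onto $\vscomponent{{\irred \psi}^r}{k}$ — and conjugates $\irredcycle_\varphi$ to $\cycle r$, it restricts, for each $k\geqslant 0$ and $n\in\Z$ (with $\zeta=\rou r$), to an isomorphism of $\eigenspace{\weightspace{\irred\varphi}{\varphi(0)\frac{\groot}{2}-k\groot}}{\irredcycle_\varphi}{\zeta^n}$ onto $\vscomponent{({\irred \psi}^r)_n}{k}$. Substituting the resulting dimension equalities into Corollary~\ref{characterinvariant} and recognising the inner sum over $k$ as a Poincar\'e series evaluated at $\indet{Z}^{-\groot}$ gives
\[
\exppolychar\varphi = \indet{Z}^{\varphi(0)\frac{\groot}{2}}\sum_{n\in\Z}\Bigl(\sum_{k\geqslant 0}\dim\vscomponent{({\irred \psi}^r)_n}{k}\,(\indet{Z}^{-\groot})^k\Bigr)\indet{Z}^{n\imagroot}=\indet{Z}^{\varphi(0)\frac{\groot}{2}}\sum_{n\in\Z}\vscharacter{{\irred \psi}^r_n}(\indet{Z}^{-\groot})\,\indet{Z}^{n\imagroot}.
\]

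Next I would apply Theorem~\ref{semiinvarianttheorem} with $V=\irred\psi$ and substitute $\indet{X}=\indet{Z}^{-\groot}$, rewriting the right-hand side as
\[
\indet{Z}^{\varphi(0)\frac{\groot}{2}}\cdot\frac{1}{r}\sum_{n\in\Z}\sum_{\divides d r}\ramanujan d n\bigl(\vscharacter{\irred\psi}(\indet{Z}^{-d\groot})\bigr)^{\frac{r}{d}}\indet{Z}^{n\imagroot}.
\]
It then remains only to identify $\vscharacter{\irred\psi}$ with $\fouriercoeff\varphi$. By Proposition~\ref{charpolypropn}, $\charpoly{a_i\expmap{\lambda_i}}=(\indett-\lambda_i)^{\deg a_i+1}$, and these polynomials are pairwise co-prime since the $\lambda_i$ are distinct; hence Proposition~\ref{TensorSimplePropn} yields $\irred\psi\cong\bigotimes_i\irred{a_i\expmap{\lambda_i}}$ as $\ghat$-modules. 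This isomorphism respects the $\zplus$-gradings — each is given by the $\slh$-weight translated by the corresponding highest weight, and $\psi(0)=\sum_i(a_i\expmap{\lambda_i})(0)$ — so $\vscharacter{\irred\psi}(\indet{Z})=\prod_i\vscharacter{\irred{a_i\expmap{\lambda_i}}}(\indet{Z})$. Evaluating each factor by Theorem~\ref{tclacharacter}, which contributes $\frac{1-\indet{Z}^{a_i+1}}{1-\indet{Z}}$ when $a_i\in\zplus$ and $\frac{1}{(1-\indet{Z})^{\deg a_i+1}}$ otherwise, gives $\vscharacter{\irred\psi}(\indet{Z})=\fouriercoeff\varphi(\indet{Z})$, and substituting this above produces exactly the asserted formula.

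Almost every step is assembly of results already in hand, so the work is essentially bookkeeping: keeping the two grading conventions aligned through Proposition~\ref{recognitionofcyclicend}, checking that $\Omega$ respects both the $\zplus$-grading and the cyclic-group action so that the eigenspace dimensions genuinely transfer, tracking the substitution $\indet{X}\mapsto\indet{Z}^{-\groot}$ that converts the $\zplus$-grading into powers of $\groot$, and multiplying out the truncated-current Poincar\'e series. I expect the last of these to be the only delicate point: matching the graded tensor decomposition of $\irred\psi$ against the individual factors of $\fouriercoeff\varphi$, and in particular handling separately, via Theorem~\ref{tclacharacter}, the indices $i$ with $a_i\in\zplus$ and those with $a_i\notin\zplus$.
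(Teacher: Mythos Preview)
Your proposal is correct and follows essentially the same route as the paper's proof: combine Corollary~\ref{characterinvariant} with Proposition~\ref{recognitionofcyclicend} to pass to $\vscharacter{{\irred\psi}^r_n}$, apply Theorem~\ref{semiinvarianttheorem}, then factor $\irred\psi$ via Proposition~\ref{TensorSimplePropn} and evaluate each tensor factor by Theorem~\ref{tclacharacter}. One inconsequential slip: since $\kronmod r(0)=r$, one has $\varphi(0)=r\,\psi(0)$ rather than $\varphi(0)=\psi(0)$, but you never actually invoke that equality---Proposition~\ref{recognitionofcyclicend} already guarantees the gradings match---so the argument stands.
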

\begin{proof}
By Corollary \ref{characterinvariant} and Proposition \ref{recognitionofcyclicend},
$$
\exppolychar \varphi = \indet{Z}^{\varphi (0) \frac{\groot}{2}} \cdot \sum_{n \in \Z} \vscharacter {{\irred \psi}^r_n} (\indet{Z}^{-\groot}) \hspace{0.1em} \indet{Z}^{n\imagroot},
$$
and by Theorem \ref{semiinvarianttheorem},
\begin{equation}\label{recursiveformula}
\vscharacter {{\irred \psi}^r_n} (\indet X) = \frac 1 r \sum_{\divides d r} \ramanujan d n \left ( \vscharacter {\irred \psi} (\indet{X}^d) \right )^{\frac r d}.
\end{equation}
By Proposition \ref{TensorSimplePropn}, there is an isomorphism of $\ghat$-modules
$$ \ts \irred \psi \cong \bigotimes_i \irred {\psi^i}, \qquad \psi^i = a_i \expmap {\lambda_i}, $$
since the $\lambda_i$ are distinct.
In particular, $\vscharacter {\irred \psi} = \prod_i \vscharacter {\irred {\psi^i}}$, and so 
$$\vscharacter {\irred \psi} (\indet X)= \fouriercoeff \varphi (\indet X)$$
by Theorem \ref{tclacharacter}.
Therefore the claim follows from equation (\ref{recursiveformula}).
\end{proof}
\end{subsection}
\end{section}

\begin{section}{Acknowledgements}
This work was completed as part of a cotutelle Ph.D. programme under the supervision of Vyacheslav Futorny at the Universidade de S\~ao Paulo and Alexander Molev at the University of Sydney.
The author would like to thank Yuly Billig, Jacob Greenstein, Ivan Dimitrov, Neil Saunders, Tegan Morrison and Senapati Rao for their helpful discussions.
\end{section}

\bibliographystyle{abbrv}
\bibliography{thesis} 

\end{document}